\renewcommand{\P}{\mathbb{P}}
\newcommand{\R}{\mathbb{R}}
\newcommand{\rptwo}{\R\P^2}
\newcommand{\Lambdatil}{\widetilde{\Lambda}}
\theoremstyle{plain}
\newtheorem{theorem}{Theorem}[section]
\newtheorem{lemma}[theorem]{Lemma}
\newtheorem{prop}[theorem]{Proposition}
\newtheorem{corollary}[theorem]{Corollary}
\newtheorem{proposition}[theorem]{Proposition}
\newtheorem{conjecture}[theorem]{Conjecture}
\newtheorem*{theorem*}{Theorem}
\newtheorem*{proposition*}{Proposition}
\theoremstyle{definition}
\newtheorem{definition}[theorem]{Definition}
\theoremstyle{remark}
\newtheorem{remark}[theorem]{Remark}
\numberwithin{equation}{section}
\title{Multisections with divides and Weinstein 4-manifolds}
\author{Gabriel Islambouli}
\author{Laura Starkston }
\thanks{LS was supported by NSF 1904074, NSF 2042345, and a Sloan research fellowship.}
\begin{document}

\maketitle

\section{Introduction}

There are various methods to diagrammatically encode a $4$-dimensional manifold, each of which is based on a decomposition theorem which breaks up the manifold into simple pieces such that the diagram encodes the way these pieces glue together (e.g. handle decompositions, Lefschetz pencils/fibrations). The most recent such method is the development of trisections by Gay and Kirby~\cite{GayKir16}, generalized to multisections in~\cite{IslNay20}. Symplectic structures have played a key role in 4-dimensional topology, due to connections with gauge theory. Compatibility between symplectic topology with handle decompositions arose from Weinstein's construction~\cite{Wei91} and in the $4$-dimensional case was diagrammatically encoded by a Legendrian surgery diagram by Gompf~\cite{Gompf98}. Similarly compatibility between symplectic manifolds and Lefschetz pencils and fibrations was established~\cite{Don99,GompfStipsicz,LoiPie01}, so a symplectic manifold can be encoded by the fiber and base surfaces, pencil points, and ordered vanishing cycles. A notion of compatibility between trisections and symplectic manifolds was proposed in~\cite{LamMei18} and shown to exist in~\cite{LamMeiSta20}, but this compatibility did not yield a simple diagrammatic way to encode a symplectic structure (rather it was motivated by attempts to obtain genus bounds).

In this article we define a stronger compatibility between a multisection and a symplectic structure, which can be diagrammatically encoded by collections of curves on a surface. In addition to the diagrammatic data of the smooth multisection, we keep track of another multi-curve representing the \emph{dividing set} of convex surfaces in contact manifolds. Thus, we call our decomposition of a symplectic manifold a \emph{multisection with divides}. Our main result is that every $4$-dimensional Weinstein domain admits a multisection with divides.


\begin{definition}
\label{def:WeinsteinMultisection}
A  \textbf{multisection with divides} of a symplectic filling $(W,\omega)$ with contact boundary $(\partial W,\xi)$ is a decomposition $W=W_1\cup\cdots \cup W_n$, such that
\begin{itemize}
    \item $W_i\cong \natural_{k_i} S^1\times D^3$. 
    \item $W_i\cap W_{i+1}=:H_{i+1} \cong \natural_g S^1\times D^2$ for $i=1, \dots, n-1$.
    \item $\Sigma := W_1\cap \cdots \cap W_n = \partial H_i$ for all $i$
    \item Each $(W_i,\omega|_{W_i})$ is a symplectic filling of $(\partial W_i, \xi_i)$.
    \item $H_i\cup H_{i+1}$ is a contact Heegaard splitting of $(\partial W_i,\xi_i)$
    \item $H_1\cup H_{n+1}$ is a contact Heegaard splitting of $(\partial W,\xi)$
\end{itemize}
A \emph{bisection with divides} is a multisection with divides with $n=2$.
\end{definition}

    The advantage of using contact Heegaard splittings is that the handlebodies each carry a standard positive \emph{and} a standard negative contact structure, which are contactomorphic. This is one of the key ideas which we use to make multisections compatible with symplectic and contact geometry.

\begin{remark}
    In the fourth bullet point of Definition~\ref{def:WeinsteinMultisection}, we ask for $(W_i,\omega|_{W_i})$ to be a symplectic filling. Note that in our setting weak, strong, Liouville, and Weinstein fillability are all equivalent, because there is a unique weak symplectic filling of $\#_{k_i} S^1\times S^2$ up to symplectic deformation, and this filling is actually Weinstein (thus strong and Liouville)~\cite{NW}. 
\end{remark}

\begin{figure}
    \centering
    \includegraphics[scale=.25]{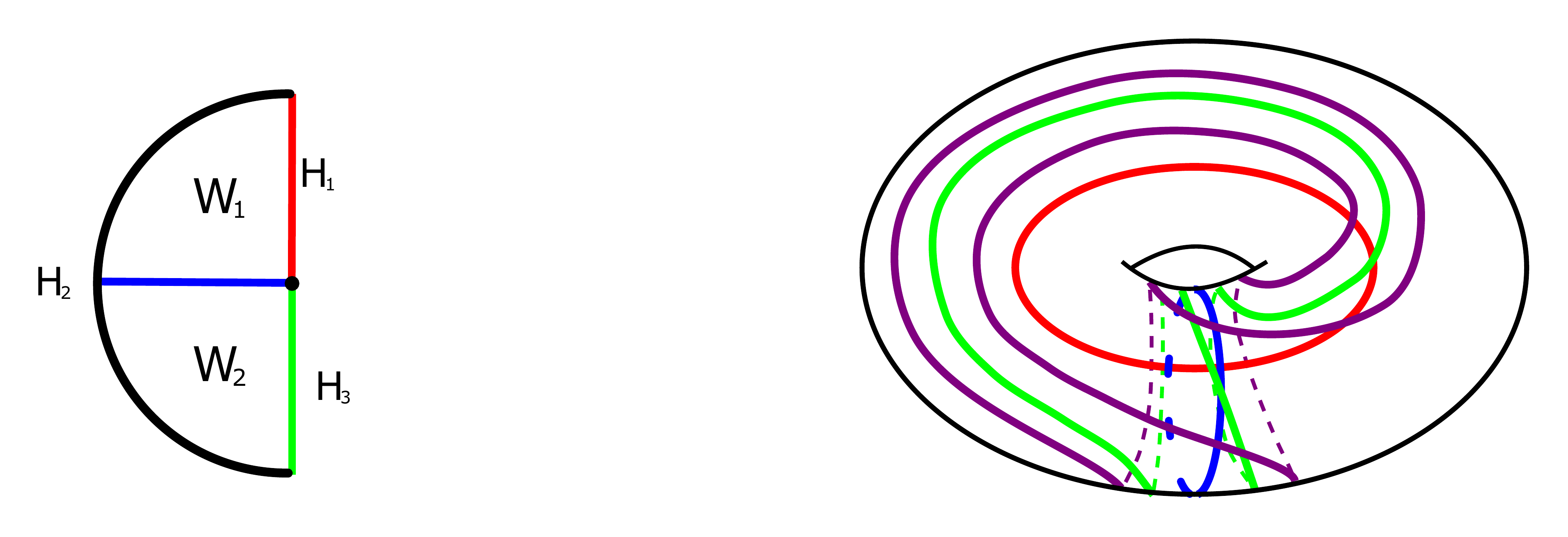}
    \caption{Left: A schematic of a bisection with divides. Each $W_i$ is a 4-dimensional Weinstein 1-handlebody and each $H_i$ is a 3-dimensional 1-handlebody obtained as a neighbourhood of a Legendrian graph. Right: A bisection diagram with divides of the unit cotangent bundle on $S^2$. The red, blue, and green curves represent curves bounding compressing disks in the respective handlebodies and the purple curves are the dividing set for the surface.}
    \label{fig:SchematicAndExample}
\end{figure}

An essential feature of these multisections with divides is that they can be encoded as a sequence of cut systems together with a fixed dividing set on a closed surface. An example of such a diagrammatic representation together with a schematic of what this encodes can be seen in Figure \ref{fig:SchematicAndExample}.

We are able to encode symplectic geometric data diagrammatically because our multisection with divides decompositions are geometrically restrictive by asking each Heegaard splitting to be a \emph{contact Heegaard splitting} (see section~\ref{ss:contactHS} for the definition). A typical multisection of a symplectic manifold would be unlikely to satisfy this condition, even if it were a ``Weinstein multisection'' as in~\cite{LamMei18,LamMeiSta20}. Therefore, it is surprising that these geometrically restrictive multisection decompositions actually exist quite generally. Our main theorem is the following.

\begin{theorem*}
    Every compact 4-dimensional Weinstein domain admits a multisection with divides.
\end{theorem*}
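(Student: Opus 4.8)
The plan is to start from a Weinstein handle decomposition of $(W,\omega)$ and reorganize it so that everything beyond the subcritical skeleton is pushed into the combinatorics of a single convex surface, and then to peel this data off one slice at a time. In dimension four the Weinstein handles have index $0$, $1$, and $2$, so after reordering by index the subcritical handles assemble into a Weinstein $1$-handlebody $W^{\mathrm{sub}}\cong \natural_k S^1\times D^3$ whose boundary is the standard tight contact connected sum $(\#_k S^1\times S^2,\xi_{\mathrm{std}})$, and the remaining critical handles are Weinstein $2$-handles attached along a Legendrian link $\Lambda\subset(\#_k S^1\times S^2,\xi_{\mathrm{std}})$ with the contact framing. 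The simplifying principle I would use throughout is the uniqueness of symplectic fillings of $(\#_m S^1\times S^2,\xi_{\mathrm{std}})$ recalled in the Remark above: to identify a piece $W_i$ with the standard $1$-handlebody $\natural_{k_i}S^1\times D^3$ it suffices to exhibit it as \emph{some} Weinstein filling of a standard contact connected sum of copies of $S^1\times S^2$.

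Next I would make the attaching data diagrammatic. I would fix a contact Heegaard splitting $\#_k S^1\times S^2=H\cup_\Sigma H'$ in which $\Sigma$ is convex with dividing set $\Gamma$ and the handlebodies are standard tight neighborhoods of Legendrian spines. Using the Legendrian realization principle and Giroux's convex surface theory, I would isotope $\Lambda$ onto $\Sigma$ as a collection of curves nonisolating with respect to $\Gamma$, so that the link, its contact framing, and the dividing set are all recorded by multicurves on the single surface $\Sigma$. At this stage the whole Weinstein domain is encoded by $\Sigma$ together with cut systems for $H$ and $H'$, the attaching curves, and the fixed dividing set $\Gamma$.

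I would then build $W=W_1\cup\cdots\cup W_n$ by absorbing the $2$-handles into the gluings rather than into the pieces. The mechanism is that gluing two $4$-dimensional $1$-handlebodies along a $3$-dimensional handlebody in their boundaries produces $2$-handles --- exactly as the union of two copies of $D^4$ along a genus-$g$ handlebody yields the disk cotangent bundle of $S^2$ in Figure~\ref{fig:SchematicAndExample}, even though each piece is handle-free above index $1$. Concretely, I would let the first slice contain $W^{\mathrm{sub}}$ and introduce a new slice to absorb each $2$-handle, choosing the successive cut systems so that each consecutive pair presents a standard $\#_{k_i}S^1\times S^2$; after stabilizing $\Sigma$ as needed, each $W_i$ is a Weinstein filling of such a connected sum and hence, by the filling uniqueness above, is forced to be the standard $1$-handlebody. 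The outer handlebodies $H_1$ and $H_{n+1}$ then recombine into the contact Heegaard splitting of $(\partial W,\xi)$ with the same dividing set $\Gamma$.

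I expect the main obstacle to be the contact bookkeeping in this last step: ensuring simultaneously that every intermediate splitting $H_i\cup H_{i+1}$ is a genuine \emph{contact} Heegaard splitting compatible with one fixed dividing set $\Gamma$, that the contact framings of the attaching curves are realized correctly by the combinatorics of the cut systems, and that reassembling the standard fillings $W_i$ along these contact handlebodies reproduces $(W,\omega)$ up to Weinstein deformation. The flexibility that should make this tractable is the fact emphasized above that each handlebody carries contactomorphic standard positive and negative contact structures, which is precisely what allows consecutive slices to be glued while keeping both the dividing set and the standard-filling property of every piece intact.
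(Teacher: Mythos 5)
Your high-level skeleton matches the paper's first proof---the subcritical handles form the first sector $W_1\cong\natural_{k}S^1\times D^3$, the $2$-handles are absorbed into later sectors, and the coexistence of contactomorphic positive and negative standard structures on a handlebody (Lemma~\ref{l:posnegct}) is what permits the gluing---but the proposal is missing the step that carries the entire weight of the theorem: actually producing a contact Heegaard splitting $H_1\cup_\Sigma H_2$ of $(\partial W_1,\xi_1)$ in which the Legendrian attaching link $\Lambda$ lies in the Legendrian spine (equivalently on a page) of $H_2$ with the correct framing, while the complementary handlebody $H_1$ is \emph{also} a standard contact handlebody. Your appeal to the Legendrian realization principle runs backwards: that principle takes a nonisolating multicurve \emph{already lying on} a fixed convex surface and realizes it as a Legendrian; it does not let you isotope a given Legendrian link in the ambient manifold onto a pre-chosen convex Heegaard surface while preserving its Legendrian isotopy class and contact framing, and for a splitting of fixed genus this is in general impossible. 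The paper fills exactly this hole with a concrete construction: it enlarges $\Lambda$ to a Legendrian graph $\Lambdatil$ by adding tunnels at each crossing, at each $1$-handle, and inside each region of the front projection until every bounded region is a $tb=-1$ unknot (Figures~\ref{fig:crossings}, \ref{fig:algorithmTunnels}, \ref{fig:leggraph1h}), takes $H_2$ to be a standard neighborhood of $\Lambdatil$, and then uses the explicit local models for the dividing set (Figure~\ref{fig:localDividingSetModels}) to verify, via characterization (3) of Lemma~\ref{l:contHeeg}, that every compressing disk of the complement $H_1$ meets the dividing set in exactly two points. Nothing in your proposal substitutes for this verification; ``choosing the successive cut systems so that each consecutive pair presents a standard $\#_{k_i}S^1\times S^2$'' is a restatement of what must be proved.

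The uniqueness-of-fillings shortcut you lean on is also circular at the decisive moment. To invoke it for a sector $W_i$ you must first know that $W_i$ is a Weinstein filling of the \emph{tight} structure on a smooth $\#_{k_i}S^1\times S^2$; in the paper both facts flow from the unproved step above: because the attaching sphere lies in the spine of $H_2$, the $2$-handle cancels a $1$-handle of $H_2\times I$ (giving the smooth type directly, with no appeal to filling uniqueness), and the new compressing curve---the $ct-1$ pushoff---meets the dividing set twice, so $H_3$ is again standard and $\overline{H_2}\cup_\Sigma H_3$ is a contact Heegaard splitting that $W_2$ fills. (A small related slip: Weinstein $2$-handles are attached with framing $ct-1$, not the contact framing itself.) If you want to avoid the tunnel algorithm, the honest alternative is the paper's second proof: quote Loi--Piergallini to obtain a PALF and take one sector per critical value (Theorem~\ref{thm:palfToMultisection}), where the open book hands you all the contact Heegaard splittings for free---but some such nontrivial input is required, and your proposal supplies neither.
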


We give two proofs of this theorem each with distinct advantages. Both proofs also yield algorithmic methods to produce a diagram for the multisection with divides. The first proof takes as input a Kirby-Weinstein diagram, and produces a bisection with divides. The disadvantage of this algorithm is that the core surface will generally have very high genus, which is typically highly inefficient. On the other hand, the output only has two sectors, instead of arbitrarily many. 

The second proof takes as input a positive allowable Lefschetz fibration (PALF) and produces a multisection with divides. In this case, the genus is more controlled, being determined by the topology of the fiber of the PALF, however there may be many sectors (potentially one for each Lefschetz singularity). More specifically we prove the following.

\begin{theorem*}
    Let $f: W^4 \to D^2$ be a PALF whose regular fiber is a genus $g$ surface with $b$ boundary components and $n$ singular fibers. Then $W^4$ admits a genus $2g+b-1$ $n$-section with divides. 
\end{theorem*}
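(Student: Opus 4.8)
The plan is to build the multisection directly from the fibration, using one sector per singular fiber. Choose a regular basepoint on $\partial D^2$ and cut the base disk into $n$ regions $R_1,\dots,R_n$, each containing exactly one critical value $c_i$, arranged so that all of the regions share a common boundary arc near $\partial D^2$; set $W_i := f^{-1}(R_i)$. The central surface is the double of a regular fiber, $\Sigma = F\cup_{\partial F}\overline{F}$, realized inside $W$ as a doubled page of the open book induced on $\partial W$ and pushed slightly into the interior. Since $\chi(\Sigma) = 2\chi(F) = 2(2-2g-b)$, the surface $\Sigma$ has genus $2g+b-1$, which is the asserted genus; all the $W_i$ meet exactly along $\Sigma$ precisely because $\Sigma$ sits over the shared boundary arc of every $R_i$.

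Next I would check the smooth topology of the pieces. The preimage of a region containing a single critical value is $F\times D^2$ with one Lefschetz $2$-handle attached along the vanishing cycle $\gamma_i$. Since $F\times D^2 \cong \natural_{2g+b-1} S^1\times D^3$ and, by allowability, $\gamma_i$ is non-separating in $F$, this $2$-handle geometrically cancels one of the $1$-handles; hence each $W_i\cong \natural_{k_i} S^1\times D^3$ with $k_i = 2g+b-2$, as required. The pairwise intersections $H_{i+1} = W_i\cap W_{i+1}$ are preimages of arcs disjoint from the critical values, so $H_{i+1}\cong F\times I \cong \natural_{2g+b-1} S^1\times D^2$, and $\partial H_i = \partial(F\times I) = \Sigma$ for every $i$. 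The two outermost handlebodies $H_1$ and $H_{n+1}$ are the two half--open--books of $\partial W$, whose union is the open-book Heegaard splitting of $\partial W$.

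For the symplectic content, each $W_i$ is itself a PALF with a single singular fiber, hence carries a Weinstein structure restricting $\omega$; by the Remark, its boundary $\#_{k_i} S^1\times S^2$ has a unique filling, so $(W_i,\omega|_{W_i})$ is a symplectic filling of $(\partial W_i,\xi_i)$ for the unique tight $\xi_i$. The boundary $\partial W_i$ is the open book with page $F$ and monodromy the single positive Dehn twist $\tau_{\gamma_i}$, while $\partial W$ is the open book with monodromy $\tau_{\gamma_1}\cdots\tau_{\gamma_n}$; in both cases the contact structure is the Giroux-compatible one.

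The main obstacle is upgrading each of these ordinary Heegaard splittings to a \emph{contact} Heegaard splitting, which is the entire geometric content of the theorem. Here I would use a Torisu-type convex-surface description of the contact structures compatible with open books: the doubled page $\Sigma$ can be made convex with dividing set equal to the binding, namely the $b$ curves that are the image of $\partial F$ in $\Sigma$ and that cut $\Sigma$ into its two copies of $F$, and each half--open--book handlebody $H_i\cong F\times I$ carries the standard tight contact structure of a neighbourhood of a Legendrian graph. Because the page $F$ and its binding are the same for every $\partial W_i$ and for $\partial W$, the dividing set is a single fixed multicurve on $\Sigma$, exactly as the definition demands. The delicate point will be arranging the Weinstein structure on $W$, the pushed-in position of $\Sigma$, and the convex surface theory to be mutually compatible, so that all of the boundary splittings are simultaneously contact Heegaard splittings with the same dividing set while each $W_i$ remains a genuine symplectic sub-filling; controlling this interaction near the binding, where the pieces are glued, is where the real work lies.
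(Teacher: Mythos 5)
Your overall strategy is the same as the paper's proof of Theorem~\ref{thm:palfToMultisection}: cut the base disk into $n$ regions containing one critical value each, take $W_i = f^{-1}(R_i)$, cancel the Lefschetz $2$-handle against a $1$-handle of $F\times D^2$ using allowability (the paper does this via Poincar\'e--Lefschetz duality, producing a product compressing disk $\Psi_i(a_i\times I)$ meeting the vanishing cycle once -- essentially your "geometric cancellation"), obtain the Weinstein structure on each sector from the restricted PALF, and identify the Heegaard splittings of the $\partial W_i$ with half open books. However, there are two genuine problems. First, the combinatorics of your base decomposition are wrong as stated: if all $n$ regions share a common boundary \emph{arc}, then $W_1\cap\cdots\cap W_n = f^{-1}(\mathrm{arc})\cong F\times I$ is $3$-dimensional, violating the multisection axiom $W_1\cap\cdots\cap W_n=\Sigma$. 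In the paper the regions are bigons sharing exactly \emph{two points} of $\partial D^2$ (the poles $(0,\pm 1)$), with each adjacent pair meeting along an arc and only $D_1$ and $D_n$ touching $\partial D^2$; the central surface is the union of the two fibers over the poles, glued along the binding after collapsing the $D^2$-direction over $\partial F$ (a Weinstein homotopic modification). Your "pushed slightly into the interior" picture of $\Sigma$ is both unnecessary and inconsistent with the rest of your own setup.

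Second, and more seriously, you stop exactly where the theorem's content lies: you declare that upgrading the splittings $H_i\cup H_{i+1}$ to \emph{contact} Heegaard splittings "is where the real work lies" and leave it unproven. In the paper this step is not delicate; it is closed immediately by Lemma~\ref{l:contHeeg} (the Giroux--Torisu equivalence). Since the Weinstein structure on each sector is induced by the Lefschetz fibration restricted over a subdisk, the restriction of the fibration to $\partial W_i$ is an open book decomposition that supports the induced contact structure $\xi_i$, and $H_i$, $H_{i+1}$ are precisely the two halves of this open book; by the equivalence of characterizations (1) and (2) in Lemma~\ref{l:contHeeg}, they are automatically standard neighborhoods of Legendrian spines, so $H_i\cup_\Sigma H_{i+1}$ is a contact Heegaard splitting with dividing set the binding $\partial F$. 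No auxiliary convexity argument for a pushed-in copy of $\Sigma$, and no separate compatibility analysis near the binding, is required: the Torisu-type convex-surface description you invoke is the \emph{conclusion} of that lemma, not an input you must engineer. Fixing the base decomposition and replacing your final hedged paragraph with an appeal to Lemma~\ref{l:contHeeg} would make your argument coincide with the paper's proof.
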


One can compare these results and Definition~\ref{def:WeinsteinMultisection} to the definition of Weinstein trisection for closed symplectic manifolds in~\cite{LamMei18,LamMeiSta20}. The main difference is that those prior definitions do not require any compatibility between the contact structure induced on the boundary of each sector and the Heegaard splitting of the boundary induced by the trisection. As a consequence, there is not easy diagrammatic data that encodes the contact and symplectic topology in these prior definitions. (The most likely candidate for such diagrammatic data is a weighted foliation for each handlebody, but the data of a weighted foliation is not discrete.) By contrast, in our more restrictive notion of multisection with divides, the symplectic and contact geometry can be diagrammatically encoded by a single dividing set on the core surface.

\begin{remark}
    Smooth multisections are compatible with both closed manifolds and manifolds with boundary. In this article, we have given the definition of a multisection for divides in the case that our symplectic manifold has contact boundary. The definition naturally extends to closed manifolds. Although in this article we establish the existence of our decompositions for Weinstein domains rather than closed symplectic manifolds, this is a key step towards establishing existence of the analogous multisections with divides for all closed symplectic manifolds via results of Donaldson~\cite{Don96} and Giroux~\cite{Gir02,Gir17}.
\end{remark}

The monodromy of a Lefschetz fibration is a product of right handed Dehn twists. In general, there can be multiple ways to write the same mapping class element as a product of right handed Dehn twists. Swapping out one of these with another is called a \emph{monodromy substitution}. A number of important symplectic cut and paste operations like rational blow-down can be seen as a monodromy substitution operation on a Lefschetz fibration \cite{EndGur10} \cite{EMV11}. By tracking the change induced by a monodromy substitution on a PALF through our algorithm, we are able to realize these cut and paste operations on multisections with divides. In Figure \ref{fig:lanternRelationOnBisections}, we demonstrate this explicitly for the monodromy substitution coming from the lantern relation, which induces a $C_2$-rational blowdown.

We conclude the paper with a classification of genus-1 multisections with divides. The smooth genus-1 multisections with boundary were previously classified in \cite{IKLM} to exist if and only if the manifold is a linear plumbings of 2-spheres. Our requirement that the multisections be compatible with genus-1 contact Heegaard splittings restricts this significantly more, as in the following theorem.

\begin{theorem*}
    Genus-1 $n$-sections with divides correspond to plumbings of $n-1$ disk bundles over 2-spheres, each of Euler number $-2$.
\end{theorem*}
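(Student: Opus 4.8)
The plan is to prove the correspondence in two directions: first that every genus-1 $n$-section with divides must be the linear plumbing of $n-1$ spheres of self-intersection $-2$, and second that this plumbing does admit such a decomposition.

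For the forward direction I would begin by extracting the combinatorial data. Since $g=1$, the core surface $\Sigma$ is a torus, each handlebody $H_i\cong S^1\times D^2$ is a solid torus, and each cut curve $\alpha_i=\partial D_i$ is a single essential curve. The divides hypothesis does the first piece of work: because $\Sigma$ is convex with dividing set $\Gamma$ inside each tight contact handlebody $H_i$, and because a compressing disk in a standard tight solid torus meets the dividing set in exactly two points, $\Gamma$ must be two parallel copies of a single slope $\gamma$ and each $\alpha_i$ must satisfy $\alpha_i\cdot\gamma=\pm1$. Normalizing $\gamma=(1,0)$ in $H_1(\Sigma)=\Z^2$, this forces $\alpha_i=(p_i,1)$ for integers $p_i$. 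The requirement that $\partial W_i=H_i\cup H_{i+1}$ be $\#_{k_i}S^1\times S^2$ rules out lens-space boundaries, so $|\alpha_i\cdot\alpha_{i+1}|=|p_i-p_{i+1}|\in\{0,1\}$; I would then argue the degenerate case $|p_i-p_{i+1}|=0$ does not arise (it reuses a cut curve, producing a reducible $S^1\times S^2$ summand incompatible with a connected plumbing) and record that consecutive slopes are Farey-adjacent.

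The heart of the argument, and the step I expect to be the main obstacle, is to show that the \emph{contact} (not merely smooth) Heegaard condition forces the slope sequence $p_i$ to be strictly monotone, equivalently that every internal sphere has self-intersection $-2$. Smoothly the sequence may wander up and down — this is exactly why the purely topological classification of \cite{IKLM} permits all linear plumbings of spheres — but each such reversal produces a sphere of self-intersection $0$ or positive and, correspondingly, makes some $\partial W_i$ or $\partial W$ a contact manifold that cannot be the tight boundary of a filling. I would make this precise using Honda's classification of tight contact structures on solid tori together with convex surface theory: gluing the two standard tight solid tori $H_i,H_{i+1}$ across $\Sigma$ with fixed dividing slope $\gamma$ yields a tight (hence fillable) structure on $\partial W_i$ precisely when $\gamma$ separates $\alpha_i$ from $\alpha_{i+1}$ in the Farey graph, and propagating this constraint along the whole chain forces the $p_i$ to be monotone with unit steps. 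Equivalently, one can exhibit the sphere $S_j$ associated to $H_j$ as a Lagrangian sphere — assembled from the Legendrian cores and their compressing disks — so that the Weinstein neighborhood theorem gives self-intersection $-2$ directly.

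Granting monotonicity, I would normalize $\alpha_i=(i-1,1)$ and reconstruct $W$. Each $W_i$ is then a $4$-dimensional $1$-handlebody (here $D^4$), and the $n-1$ internal surfaces $H_2,\dots,H_n$ contribute a chain of $2$-spheres $S_2,\dots,S_n$ with $S_j\cdot S_j=-2$ (from the homology relation $\alpha_{j-1}+\alpha_{j+1}=2\alpha_j$) and $S_j\cdot S_{j+1}=1$; the boundary splitting $H_1\cup H_{n+1}$ has $|\alpha_1\cdot\alpha_{n+1}|=n$, so $\partial W=L(n,n-1)$, exactly the boundary of the $A_{n-1}$ plumbing. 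This identifies $W$ with the linear plumbing of $n-1$ disk bundles of Euler number $-2$. For the converse I would run the algorithm of the PALF theorem: the linear plumbing is the $A_{n-1}$ Milnor fiber, which is a PALF with annulus fiber ($g=0$, $b=2$) and $n$ singular fibers all with vanishing cycle the core circle; since $2g+b-1=1$, that theorem produces a genus-1 $n$-section with divides on it. Combining the two directions yields the stated correspondence.
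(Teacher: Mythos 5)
Your overall route coincides with the paper's proof of Proposition~\ref{prop:Genus1Classification}: use the divides condition to pin the dividing set as two parallel essential curves and force each cut curve to meet that slope once, use tightness of each sector boundary to fix the handedness of the twist relating consecutive slopes, conclude the $-2$-chain via the smooth genus-1 classification of \cite{IKLM}, and obtain existence from the annulus-fiber PALF on the $A_{n-1}$ plumbing exactly as the paper does via Theorem~\ref{thm:palfToMultisection}. So this is not a genuinely different argument, but two of your sub-steps are wrong as stated. The more serious one is your proposed tightness criterion, that $H_i\cup H_{i+1}$ is tight ``precisely when $\gamma$ separates $\alpha_i$ from $\alpha_{i+1}$ in the Farey graph.'' With $\gamma=\infty$ and $\alpha_i=p$, both candidates $\alpha_{i+1}=p\pm 1$ span Farey triangles $(p,\,p\pm1,\,\infty)$, and no separation property in the Farey graph distinguishes them: the tight/overtwisted dichotomy here is an orientation (handedness) phenomenon, invisible to the unoriented Farey combinatorics your phrasing uses. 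The paper resolves exactly this point by noting that a genus-1 contact Heegaard splitting corresponds to an open book with annular page, whose monodromy is a single right- or left-handed twist about the core; fillability of each sector boundary rules out the left-handed twist, forcing $\alpha_{i+1}=\tau_d(\alpha_i)$ and hence monotone unit steps with $\langle\alpha_{i-2},\alpha_i\rangle=-2$. Your fallback tools (Honda's classification, or realizing each internal sphere as a Lagrangian sphere and applying the Weinstein neighborhood theorem) can be made to work, but only if you track co-orientations, which is precisely the data your separation criterion discards.

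The second issue is your exclusion of the case $p_i=p_{i+1}$. Your stated reason --- that a repeated cut curve ``produces a reducible $S^1\times S^2$ summand incompatible with a connected plumbing'' --- is not correct: a repeated slope gives a trivial product sector with $k_i=1$ and boundary the \emph{tight} $S^1\times S^2$ (the annulus open book with trivial monodromy), which is perfectly consistent with Definition~\ref{def:WeinsteinMultisection} and creates no reducible summand in the four-manifold. Such trivial sectors are excluded by the convention, inherited from the smooth classification in \cite{IKLM}, that consecutive cut curves in a genus-1 diagram intersect geometrically once; the paper's proof makes the same implicit assumption, so this is a shared convention rather than a gap you introduced, but the justification you offer for it would not hold up. With the handedness step repaired as above, the rest of your argument (the normalization $\alpha_i=(p_i,1)$, the homology computation $\alpha_{j-1}+\alpha_{j+1}=2\alpha_j$ giving the $-2$'s, the boundary lens space, and the converse via the PALF theorem) matches the paper's and is sound.
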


The organization of this paper is as follows. Section~\ref{s:contacthandlebody} discusses the way we will ask contact structures to be compatible with Heegaard splittings and $3$-dimensional handlebodies. Section~\ref{s:KWdiag} gives our first proof of our main theorem, showing how to turn a Kirby-Weinstein handlebody diagram into a multisection with divides. Section~\ref{s:PALF} gives the second proof of our main theorem, showing how to turn a PALF into a multisection with divides. Section~\ref{s:g1stab} classifies genus-1 multisections with divides and shows how multisection diagrams with divides can be stabilized to increase the genus of the surface. Finally, in Section~\ref{s:questions} we discuss some questions for future research.

\section{Contact geometry and Heegaard splittings} \label{s:contacthandlebody}

In this section, we explain the compatibility condition between contact structures and Heegaard splittings which we will require on the boundary of each sector. We also give a diagrammatic formulation of this compatibility. We begin with some background on surfaces in contact manifolds.

A surface $\Sigma$ embedded in a contact 3-manifold $(Y, \xi)$ is said to be \textbf{convex} if there exists a contact vector field $v$ for $(Y, \xi)$ such that $v$ is transverse to $\Sigma$. Convex surfaces are generic, meaning every smoothly embedded surface has a $C^\infty$-small isotopy to a convex surface~\cite{Gir91,Hon00}. Given a contact vector field $v$ transverse to $\Sigma$ we obtain a multicurve called the \textbf{dividing set}, denoted $\Gamma_\Sigma$. This multicurve is defined by $\Gamma_\Sigma = \{x\in \Sigma | v_x \subset \xi \}$ and the isotopy class of this curve is an invariant of the embedding of $\Sigma$ up to isotopy through convex surfaces.

The dividing set captures all of the contact geometric information of $\Sigma$ in a neighbourhood $\Sigma \times [-\epsilon, \epsilon]$ obtained by flowing by the contact vector field. More precisely we have the Giroux flexibility theorem.

\begin{theorem}(\cite{Gir91})
Let $\Sigma$ be a closed orientable surface, and  $f_0: \Sigma \to (Y, \xi)$ and $g: \Sigma \to (Y', \xi')$ be convex embeddings of $\Sigma$. Suppose $v$ is a contact vector field transverse to $(Y, \xi)$. If the oriented multicurves $f_0^{-1}(\Gamma_{f_0(\Sigma)})$ and $g^{-1}(\Gamma_{g(\Sigma)})$ are isotopic, then there exists an isotopy $f_t$  for $t \in [0,1]$ such that $f_1^*(\xi|_{f_1(\Sigma)}) = g^*(\xi|_{f_1(\Sigma)})$.
\end{theorem}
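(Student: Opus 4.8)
The plan is to prove Giroux flexibility in two stages: first reduce the statement to a purely two-dimensional question about foliations on $\Sigma$ by putting $\xi$ into an $I$-invariant normal form near the convex surface, and then prove a \emph{realization lemma} that lets us adjust the characteristic foliation while keeping the dividing set fixed.

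First I would build the standard $I$-invariant neighborhood. Since $f_0(\Sigma)$ is convex with transverse contact vector field $v$, flowing by $v$ produces an embedding $\Sigma \times [-1,1] \hookrightarrow Y$ with $v = \partial_t$ and $\xi$ invariant under translation in $t$. Writing $\xi = \ker\alpha$ and normalizing, $t$-invariance forces $\alpha = u\,dt + \beta$ with $u \colon \Sigma \to \R$ and $\beta \in \Omega^1(\Sigma)$ independent of $t$, and the contact condition reduces (up to sign conventions) to $u\,d\beta - du\wedge\beta > 0$ as an area form on $\Sigma$. In these coordinates the dividing set is exactly $\Gamma = \{u = 0\}$ and the characteristic foliation of $\Sigma \times \{0\}$ is $\ker\beta$. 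The point of this step is that the germ of $\xi$ along a convex surface is completely determined by its characteristic foliation: two convex surfaces with the same characteristic foliation have contactomorphic neighborhoods, which one checks by a Moser argument interpolating the corresponding pairs $(u,\beta)$.

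The heart of the argument is the realization lemma: any singular foliation $\mathcal{F}'$ on $\Sigma$ that is divided by $\Gamma$ can be realized as the characteristic foliation of a surface isotopic to $\Sigma \times \{0\}$ through convex surfaces all having dividing set $\Gamma$. I would prove this by the graph construction: for $h \colon \Sigma \to (-1,1)$ the graph $\Sigma_h = \{(x,h(x))\}$ has characteristic foliation $\ker(\beta + u\,dh)$. Thus varying $h$ perturbs $\beta$ by $u\,dh$, an adjustment that can be made essentially arbitrary away from $\Gamma$ (where $u \ne 0$) while automatically degenerating to zero along $\Gamma$ (where $u = 0$). The content of the lemma is that the space of foliations divided by the fixed $\Gamma$ is connected through such a family and that each intermediate graph remains convex with dividing set $\Gamma$; this is where the defining properties of a divided foliation — transversality of $\Gamma$ to $\mathcal{F}'$ and the prescribed expansion/contraction behavior on the two sides of $\Gamma$ — are used to control the construction near the singular set and near $\Gamma$.

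Finally I would assemble the two ingredients. Using that the pulled-back dividing sets are isotopic, isotope $f_0$ within the $I$-invariant neighborhood so that $f_0^{-1}(\Gamma_{f_0(\Sigma)})$ literally coincides with $g^{-1}(\Gamma_{g(\Sigma)})$. Then apply the realization lemma to isotope $f_0$ further, through convex surfaces with this common dividing set, until its characteristic foliation agrees with that of $g$. The normal-form uniqueness from the first step then forces the pulled-back contact germs to agree, producing the desired isotopy $f_t$ with $f_1^*(\xi) = g^*(\xi')$. I expect the realization lemma to be the main obstacle: the graph perturbation degenerates precisely along $\Gamma$, so matching the foliations there — and simultaneously preserving convexity throughout the homotopy — requires a careful, foliation-adapted choice of the interpolating functions rather than a single global Moser-type flow.
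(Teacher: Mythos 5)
This statement is not proved in the paper at all: it is quoted verbatim from Giroux \cite{Gir91} as background, so there is no internal proof to compare against. Measured against Giroux's original argument, your outline has the right architecture --- the $I$-invariant normal form $\alpha = u\,dt + \beta$ with $\Gamma = \{u=0\}$, the uniqueness of the contact germ along a surface with prescribed characteristic foliation (proved by a Moser-type interpolation), a realization lemma, and the final assembly matching first the dividing sets and then the foliations. That assembly step is exactly how flexibility is deduced in \cite{Gir91}.

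The genuine gap is in your realization lemma. The graph construction only lets you perturb $\beta$ by $u\,dh$, i.e.\ by $u$ times an \emph{exact} form; this is nowhere near ``essentially arbitrary away from $\Gamma$'' (you cannot, for instance, change $\beta$ by $u$ times a closed non-exact form, nor change the singularity pattern of the foliation by a single graph), and you give no mechanism for connecting two arbitrary foliations divided by $\Gamma$ through such perturbations --- which, as you concede, is the entire content. Giroux's actual proof avoids this: one first builds a $t$-invariant contact structure $\ker(u_1\,dt + \beta_1)$ inducing the target foliation with the same dividing set (here the hypothesis that the multicurves are isotopic \emph{as oriented multicurves} is used to match $u_1$ with $u_0$, including the sign of $u$ on each component of $\Sigma \setminus \Gamma$); then one observes that for fixed $u$ the contact condition $u\,d\beta + \beta \wedge du > 0$ is \emph{linear} in $\beta$, so the straight-line path $\beta_s = (1-s)\beta_0 + s\beta_1$ consists of invariant contact forms; finally Gray stability applied to this compact family yields an ambient isotopy, and the images of $\Sigma \times \{0\}$ under it are the required surfaces --- they remain convex with dividing set $\Gamma$ because the pushforward of $\partial_t$ is a transverse contact vector field at every stage, but they need not be graphs. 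Replacing your ``connectedness through graph perturbations'' claim by this convexity-in-$\beta$ plus Gray argument repairs the sketch; the rest of your proposal then goes through as in \cite{Gir91}.
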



Given a handlebody, $H$, a \textbf{spine} for $H$ is a graph, $G$, such that $H$ retracts onto $G$. 
If $H$ additionally carries a contact structure, then a spine, $G$, is said to be a \textbf{Legendrian spine} if each edge is a Legendrian arc or knot. By combining Darboux's theorem with the standard neighborhood for Legendrians~\cite[Theorem 2.5.8]{Geiges}, we see that Legendrian spines have a standard tight contact neighborhood, determined by the ribbon neighborhood of the spine tangent to the contact planes. 


\subsection{Contact Heegaard splittings} \label{ss:contactHS}

\begin{definition}
A \emph{contact Heegaard splitting} of a contact manifold $(Y, \xi)$ is a Heegaard splitting $Y=H_1\cup_\Sigma H_2$ such that $H_1$ and $H_2$ are contactomorphic to standard neighborhoods of Legendrian spines $L_1$ and $L_2$.
\end{definition}

We will call the handlebodies which are standard neighborhoods of Legendrian graphs \emph{standard contact handlebodies}. Note that a smooth handlebody of a given genus typically has multiple different ``standard'' contact structures, which are differentiated by the number of components of the dividing set on the boundary. The different options come from the fact that there are generally multiple different surfaces with boundary whose doubles are a genus $g$ surface (determined by the number of boundary components). 

This notion of contact Heegaard splittings originated with Giroux~\cite{Giroux} and was also developed by Torisu~\cite{Tor00}, and can be equivalently formulated as follows.

\begin{lemma}[\cite{Giroux, Tor00}] \label{l:contHeeg}
Let $H_1\cup_\Sigma H_2$ be a Heegaard splitting of $(Y,\xi)$ with $\Sigma$ a convex surface. The following are equivalent
\begin{enumerate}
    \item $H_1$ and $H_2$ are standard neighborhoods of Legendrian graphs
    \item $H_1$ and $H_2$ are two halves of an open book decomposition supporting $(Y,\xi)$
    \item For each handlebody $H_i$, there exists a set of compression disks cutting $H_i$ into a ball, such that the boundary of each compression disk intersects the dividing set on $\Sigma$ in exactly two points.
\end{enumerate}
\end{lemma}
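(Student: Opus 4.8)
The plan is to prove the three-way equivalence by establishing a cycle of implications, leaning on the Giroux correspondence between contact structures and open books, the theory of convex surfaces, and the standard neighborhood theorem for Legendrian graphs cited in the excerpt. I would organize the argument as $(1)\Leftrightarrow(2)$ and $(1)\Leftrightarrow(3)$, or alternatively $(2)\Rightarrow(1)\Rightarrow(3)\Rightarrow(2)$, choosing whichever keeps the convex-surface bookkeeping cleanest. The key geometric fact underlying everything is that a standard neighborhood of a Legendrian graph is a contact handlebody whose convex boundary has a dividing set meeting each co-core compressing disk in exactly two points — this is precisely the content of condition $(3)$ phrased intrinsically, so the $(1)\Leftrightarrow(3)$ direction is really a local statement about the model neighborhood plus Giroux flexibility.

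For $(1)\Rightarrow(3)$, I would take the ribbon neighborhood of the Legendrian spine $L_i$ tangent to the contact planes; the compressing disks dual to the edges of the graph are each contained in a standard Legendrian edge neighborhood, where one computes directly that the dividing set (determined by the sign of the contact framing along the ribbon) crosses the meridian disk in two points. For the converse $(3)\Rightarrow(1)$, I would use the hypothesis that each $H_i$ is cut into a ball by compressing disks whose boundaries hit $\Gamma_\Sigma$ twice: by the Giroux flexibility theorem stated just above, the germ of $\xi$ near $\Sigma$ is determined by $\Gamma_\Sigma$, and I would extend inward across the compressing disks one at a time. The condition that each disk meets $\Gamma_\Sigma$ exactly twice guarantees the dividing set on the cut-open convex boundary remains a single arc on each disk, forcing tightness of the piece (by Giroux's criterion, a convex disk with connected dividing set has a tight neighborhood) and letting me recognize the core of each handle as a Legendrian arc; assembling these reconstructs the Legendrian spine.

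The equivalence $(1)\Leftrightarrow(2)$ is where I would invoke the Giroux correspondence most heavily. Given an open book supporting $(Y,\xi)$, the union of the pages over half the binding rotation forms a handlebody that deformation retracts onto a Legendrian graph (the binding together with a spine of the page), and the standard contact structure on such a neighborhood matches the supported one; conversely, a Heegaard splitting into two standard Legendrian-graph neighborhoods can be thickened into a pair of page-halves whose common boundary $\Sigma$ carries the dividing set as the binding's separating multicurve, recovering an open book. Here I would cite Torisu's characterization directly rather than rebuild the open book by hand.

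The main obstacle I anticipate is the $(3)\Rightarrow(1)$ direction: going from the purely combinatorial ``two intersection points per compressing disk'' condition back to an honest Legendrian spine requires controlling the contact germ not just near $\Sigma$ but throughout $H_i$, and a priori the dividing data on $\Sigma$ only pins down a collar. The resolution is to use convexity of the cut-open pieces and an inductive application of Giroux flexibility handle-by-handle, ensuring at each stage that the relevant disk stays convex with the minimal (two-point) dividing set so that no overtwisted pocket can form; verifying that this induction closes up without introducing extra dividing curves is the delicate technical point, and it is essentially the reason the ``exactly two points'' hypothesis cannot be relaxed.
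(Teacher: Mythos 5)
Your outline follows essentially the same route as the paper's review of this lemma: $(1)\Rightarrow(2)$ via the contact-framed ribbon of the Legendrian spine serving as the page, with $H_i\cong F\times I/\sim$ and $d\alpha$ an area form on pages; $(2)\Rightarrow(1)$ by Legendrian realizing a spine of a page inside the half open book; $(1),(2)\Rightarrow(3)$ because the meridians of the edges (equivalently the disks $a_j\times I/\sim$ over a complete arc system) meet the ribbon boundary, which is the binding and hence the dividing set, in exactly two points; and $(3)\Rightarrow(1)$ by cutting along the disks to a tight ball and regluing as standard contact $1$-handle attachments. So the skeleton is right, but two points need repair. First, in your $(2)\Rightarrow(1)$ you describe the Legendrian graph as ``the binding together with a spine of the page'': the binding is a \emph{transverse} curve lying on $\Sigma$ (it is exactly the dividing set $\Gamma_\Sigma$), so it cannot be part of a Legendrian spine; the spine is a Legendrian realization of a spine of a single page in the interior of the handlebody, as in the paper.

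Second, and more substantively, your $(3)\Rightarrow(1)$ argument is circular as stated. Making a compressing disk convex with Legendrian boundary meeting $\Gamma_\Sigma$ in two points only controls the \emph{endpoints} of dividing arcs on the disk; it does not exclude closed dividing curves in the disk's interior, which occur precisely when $\xi|_{H_i}$ is overtwisted nearby. You cannot ``ensure at each stage that the relevant disk stays convex with the minimal (two-point) dividing set'' -- the dividing set of a convex disk is determined by $\xi$ up to isotopy, not chosen. An overtwisted solid torus with two longitudinal dividing curves on its convex boundary satisfies the literal intersection count in $(3)$ while failing $(1)$, so the step genuinely requires either tightness of $\xi|_{H_i}$ (which is part of Torisu's hypotheses) or reading $(3)$ as asserting that the disks can be made convex with connected dividing set. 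Granted, the paper's own one-line cut ``to get a ball with the unique tight contact structure'' makes the same elision; the honest mechanism is that cutting along a convex disk whose dividing set is a single boundary-parallel arc, followed by edge-rounding, yields a ball whose tight structure is unique by Eliashberg, after which undoing the cuts attaches standard contact $1$-handles and exhibits $H_i$ as a standard neighborhood of a Legendrian graph. Since you flagged this as the delicate point but then asserted it was ``guaranteed,'' you should either add the tightness hypothesis explicitly or cite Torisu for this implication rather than claim the boundary count alone forces it.
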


Though these equivalences are known, we review how to pass between them. If we start with $H_1$ and $H_2$ as standard neighborhoods of Legendrian graphs, we can see the page $F$ of the corresponding open book decomposition as a contact framed ribbon of the Legendrian. The contact planes are tangent to $F$ along the Legendrian, so in a standard neighborhood, $d\alpha$ is a positive area form when restricted to this page. Moreover, in the standard model, we can identify $H_i = F\times I/\sim$ (where $(x,t)\sim (x,t')$ for $x\in \partial F$), such that $d\alpha$ is positive on each $F\times \{t\}$. In this way, we see that the first characterization gives rise to the second characterization. Conversely, given a supporting open book decomposition, we can Legendrian realize a spine on two pages. Restricting the contact structure defined on an abstract open book to half of the pages, we see that this is a standard neighborhood of this Legendrian spine. The dividing set on the boundary of a standard neighborhood of a Legendrian graph is precisely the intersection of the contact framed ribbon with $\Sigma=\partial H_i$. The meridian of each edge of the graph thus intersects the dividing set in exactly two points. Similarly, if $H_1$ and $H_2$ are two halves of an open book decomposition, the dividing set on $\Sigma=\partial H_1=\partial H_2$ is the binding of the open book $\partial F$. A collection of compressing disks for $F\times I/\sim$ is given by $a_j\times I/\sim$ where $\{a_j\}$ is a collection of arcs on $F$ which cut $F$ into a disk. Each $a_j\times I/\sim$ intersects the binding in exactly two points (the endpoints of $a_j$ in $\partial F$). If we have a Heegaard splitting with compression disks for each handlebody which each intersect the dividing set in two points, we can cut along these compressing disks to get a ball with the unique tight contact structure. Reversing the cuts amounts to attaching standard contact $1$-handles which gives a standard neighborhood of a Legendrian graph.


There is a fundamental challenge in obtaining compatibility between a multisection and a symplectic structure. Each interior $3$-dimensional handlebody in a multisection appears in the boundary of two $4$-dimensional sectors, but the boundary orientations are opposite to each other. Viewing a sector as a symplectic filling of its boundary induces a contact structure on the boundary which is a \emph{positive} contact structure with respect to the \emph{boundary} orientation. The sign of a contact structure (with respect to a fixed orientation) is an inherent property of the contact planes which measures the direction/handedness of the twisting of the contact planes. Note, this is not the same as the co-orientation of the contact structure which depends only on the contact form, not the contact structure. Therefore, we cannot have identical contact structures on a fixed manifold realize both positive and negative contact structures with respect to a fixed orientation. In general this suggests that we would need two different contact structures on each interior $H_i$ of a multisection. However, as we show in the following lemma, there are both positive and negative \emph{standard contact handlebodies} which are orientation reversing contactomorphic to each other. Both the positive and negative contact structures are supported by the same half open book.


\begin{lemma} \label{l:posnegct}
Let $F$ be a surface with boundary. Let $\eta$ be a $1$-form on $F$ which evaluates positively on the oriented boundary of $F$ such that $d\eta$ is an area form on $F$. Consider $F\times I/\sim$ where $(x,t)\sim(x,t')$ whenever $x\in \partial F$. Using $t$ as the coordinate parametrizing the $I$ direction, let 
\[\alpha^\pm=\pm dt+\eta.\]

Then $\xi^+:=\ker(\alpha^+)$ (respectively $\xi^-:=\ker(\alpha^-)$) is a positive (respectively negative) contact structure on $H$ supported by the trivial half open book on $F\times I/\sim$ (with pages $F\times\{t\}$).
\end{lemma}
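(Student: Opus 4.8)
The plan is to verify directly that $\alpha^\pm = \pm dt + \eta$ is a contact form of the claimed sign, and then to identify the supporting open book. These are two essentially separate computations, and neither should be hard; the only genuine subtlety is bookkeeping about orientation conventions, which is where I expect the real care to be needed.

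First I would check the contact condition. Writing $\alpha^\pm = \pm dt + \eta$ and using that $\eta$ is pulled back from $F$ (so $dt \wedge d\eta$ makes sense on the product), I compute
\begin{equation}
\alpha^\pm \wedge d\alpha^\pm = (\pm dt + \eta)\wedge d\eta = \pm\, dt \wedge d\eta + \eta \wedge d\eta = \pm\, dt\wedge d\eta,
\end{equation}
since $\eta \wedge d\eta = 0$ on the surface $F$ (it is a $3$-form on a $2$-dimensional factor). Because $d\eta$ is assumed to be an area form on $F$, the form $dt \wedge d\eta$ is nonvanishing, so both $\alpha^+$ and $\alpha^-$ are contact forms. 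The sign of $\alpha^\pm \wedge d\alpha^\pm$ relative to the chosen orientation of $H = F\times I/\sim$ is then $\pm$, which is exactly the statement that $\xi^+ = \ker(\alpha^+)$ is positive and $\xi^- = \ker(\alpha^-)$ is negative. The one thing I would be careful about is fixing the orientation of $H$ once and for all (say from an orientation of $F$ together with $\partial_t$) and confirming that $+dt\wedge d\eta$ agrees with that orientation, so that the word "positive" is unambiguous.

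Next I would identify the open book. The pages $F\times\{t\}$ are the proposed pages, with binding $\partial F$ (which is collapsed in the $I$-direction by the relation $\sim$). Per the definition of a supporting open book, I would check two things: that $\alpha^\pm$ restricts to a positive contact form transverse to each page in the sense that $d\alpha^\pm = d\eta$ is a positive area form on each $F\times\{t\}$, which holds by hypothesis; and that $\alpha^\pm$ evaluates positively on the oriented binding $\partial F$, which is exactly the assumption that $\eta$ is positive on the oriented boundary (note $dt$ restricts to zero along $\partial F$ since that circle direction is collapsed, so $\alpha^\pm|_{\partial F} = \eta|_{\partial F} > 0$ for \emph{both} signs). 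This is the key point that makes the same half open book support both the positive and negative structures: the $\pm dt$ term controls the sign of the contact structure but contributes nothing along the binding, while $\eta$ alone governs the page and binding behavior.

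The main obstacle, then, is not analytic but a matter of reconciling orientation conventions: the manifold $H$ carries an orientation, the boundary $\partial F$ carries an induced orientation, and the pages carry the area form $d\eta$, and I must ensure these are mutually consistent so that the single computation above genuinely certifies "positive," "negative," and "supporting" simultaneously. Once the conventions are pinned down, the argument is a one-line wedge computation together with the restriction checks, so I would keep the exposition brief and emphasize the orientation setup rather than the calculation.
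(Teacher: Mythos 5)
Your proposal is correct and follows essentially the same route as the paper: the one-line wedge computation $\alpha^\pm\wedge d\alpha^\pm = \pm\, dt\wedge d\eta$ for the signed contact condition, followed by checking that $d\alpha^\pm = d\eta$ is a positive area form on the pages and that $\alpha^\pm$ restricts to $\eta$ (hence is positive) on the binding. The only point the paper flags that you leave implicit is that $\pm dt + \eta$ is $t$-independent and therefore descends to the quotient $F\times I/\sim$, but your observation that $dt$ vanishes along the collapsed direction at $\partial F$ covers the same ground.
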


\begin{proof}
$\xi^\pm$ is a $\pm$ contact structure because
$$ (\pm dt+\eta) \wedge d(\pm dt+\eta) = \pm dt\wedge d\eta$$
is a $\pm$ volume form on $F\times I$. Note that $\pm dt +\eta$ is independent of $t$ so the form is well-defined on the quotient $F\times I/\sim$.

To check that it is supported by the open book, we need to verify that $\alpha^\pm$ evaluates positively on the binding and $d\alpha^\pm$ is a positive area form on the pages. Indeed if $T$ is a vector positively tangent to $\partial F$, $\alpha^\pm(T)=\eta(T)>0$, and $d\alpha^\pm = d\eta$ is a positive area form on $F$ by assumption.
\end{proof}

\begin{remark}
Note, it is always possible to find such a $1$-form $\eta$ on a surface $F$ with non-empty boundary.
Also, observe the orientation reversing diffeomorphism which sends $t\in I$ to $-t$ takes $\alpha^+$ to $\alpha^-$.
\end{remark}

\subsection{Contact Heegaard diagrams}

Motivated by the third characterization in Lemma~\ref{l:contHeeg}, we define a diagrammatic version of contact Heegaard splittings.

\begin{definition}
A \textbf{contact Heegaard diagram} is a quadruple $(\Sigma, d, C_1, C_2)$ such that:

\begin{itemize}
\item $\Sigma$ is a closed oriented surface.
\item $d$ is a multicurve which separates $\Sigma$ into two homeomorphic surfaces with boundary. 
\item $C_i$ $i \in \{1,2 \}$ is a cut system for $\Sigma$ such that each curve intersects $d$ twice. 
\end{itemize}
\end{definition}

Given a contact Heegaard diagram, we can reconstruct a contact manifold together with a contact Heegaard splitting. In particular, if $F$ is one of the halves of $\Sigma \setminus d$, then we endow each of the handlebodies the standard contact structures coming from $F \times I$. This induces an open book with page $F$ and binding $d$, which in turn induces a contact structure. 
Conversely, every contact Heegaard splitting has a contact Heegaard diagram by characterization (3) of Lemma~\ref{l:contHeeg}.

This correspondence allows us to give a classification of genus-1 contact Heegaard splittings of $S^3$ as we will see in Section \ref{subsec:genus1Classification}. In particular, by Euler characteristic considerations, genus-1 contact Heegaard splittings correspond to open book decompositions of $S^3$ with an annular page. The monodromy then consists or a left-handed or right handed Dehn twist about the core of this annulus. The right handed Dehn twist gives the tight contact structure on $S^3$ whereas the left handed Dehn twist gives an overtwisted structure. These lead to the Heegaard diagrams shown in Figure \ref{fig:genus1S3s}.

\begin{figure}
    \centering
    \includegraphics[scale=.3]{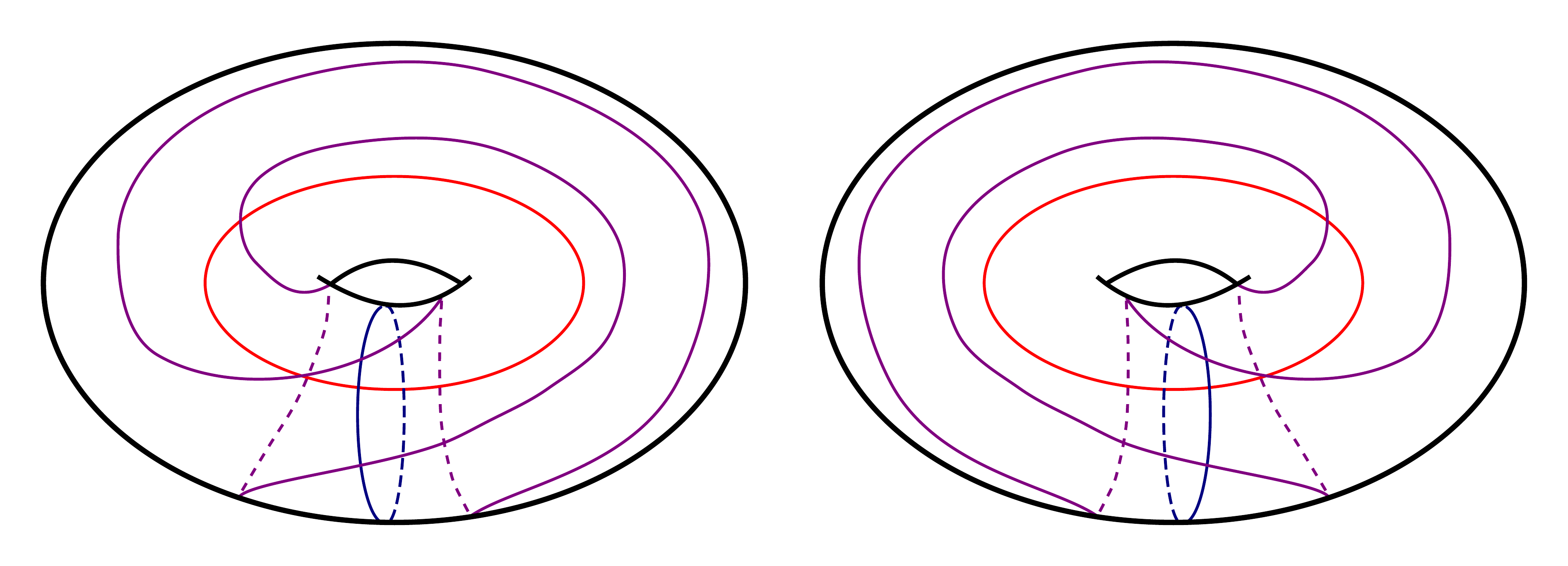}
    \caption{The two genus-1 convex Heegaard diagrams of $S^3$. The one on the left is tight whereas the one on the right is overtwisted.}
    \label{fig:genus1S3s}
\end{figure}

\subsection{Dividing sets on standard neighborhoods from Legendrian front projections}

In Section~\ref{s:KWdiag}, we will be looking at Heegaard splittings of $S^3$, or more generally $\#_{k_1} S^1\times S^2$ where one handlebody is a standard neighborhood of a Legendrian graph $\Lambdatil$ described via a front projection, and the other handlebody is the complement. In order to verify that the complement is a standard contact handlebody, it will be useful to know exactly how to draw the dividing set on the boundary of the standard neighborhood of an explicitly embedded Legendrian graph in terms of the front projection.

We will mainly focus on trivalent Legendrian graphs. Higher valence vertices can be split into trivalent vertices by growing additional Legendrian edges via a Legendrian deformation which preserves the standard neighborhood and thus, the dividing set on its boundary. Given a front projection representing a Legendrian embedding in $\R^3$, we can draw the corresponding dividing set on the boundary of a standard neighborhood. Recall that the dividing set on the boundary of a standard contact handlebody is the boundary of the page of the compatible open book decomposition. A page of the open book is given by the contact framed ribbon of the Legendrian knot. Considering how the contact framing wraps around at left and right cusps and at left and right trivalent vertices, we obtain the local models for the dividing set as shown in Figure \ref{fig:localDividingSetModels}. The first five models cover the generic front projections of a trivalent Legendrian graph. The last model includes a Legendrian arc which degenerately projects to a single point, whose two end points are trivalent vertices. We can isotope this model to a generic front projection as in Figure~\ref{fig:crossings}, and thus derive its local model from the previous models. We include this last ``compound'' model for convenience as we will use it extensively in implementing our algorithm of Section~\ref{ss:existenceKirby}.

\begin{figure}
    \centering
    \includegraphics[scale=.3]{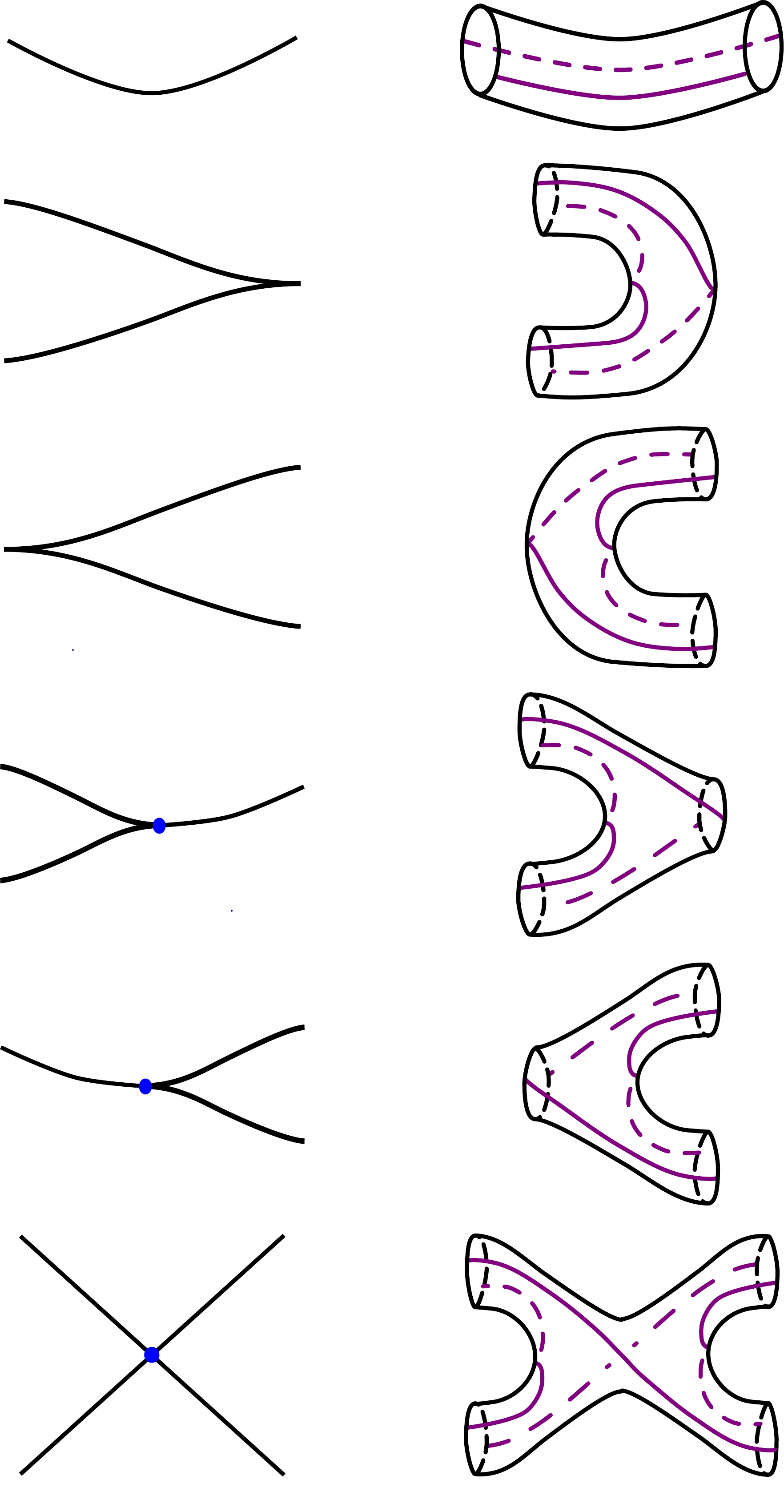}
    \caption{Left: Local models for a trivalent Legendrian graph. Right: Local models for the dividing set of the boundary of a regular neighbourhood of the graph. To obtain the last model from the previous ones, see Figure~\ref{fig:crossings}.}
    \label{fig:localDividingSetModels}
\end{figure}

\section{Kirby-Weinstein handlebody diagrams and multisections with divides} \label{s:KWdiag}

In this section we will show how to use a Kirby-Weinstein handlebody diagram to produce a bisection with divides. A consequence of our proof is an algorithm to obtain a multisection diagram with divides (defined in Section~\ref{ss:WMdiagrams}) from a Kirby-Weinstein diagram.

\subsection{Existence of bisections with divides from Kirby-Weinstein handlebody diagrams}
\label{ss:existenceKirby}

\begin{theorem}
Every compact $4$-dimensional Weinstein domain admits a bisection with divides.
\end{theorem}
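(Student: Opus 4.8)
The plan is to run the Kirby--Weinstein (Legendrian surgery) diagram of $W$ through an explicit surface-diagram construction, producing a bisection diagram with divides $(\Sigma, d, C_1, C_2, C_3)$: a single dividing set $d$ on a closed surface $\Sigma$ together with three cut systems, each curve of each $C_i$ meeting $d$ in exactly two points, such that $(\Sigma, C_1, C_2)$, $(\Sigma, C_2, C_3)$ and $(\Sigma, C_1, C_3)$ are Heegaard diagrams for connected sums of $S^1\times S^2$ and reassemble to $W$. Present $W$ as $\natural_{k_1} S^1\times D^3$ (the $0$- and $1$-handles) with Weinstein $2$-handles attached along a Legendrian link $\Lambda$ with its contact framing, so that $W_1:=\natural_{k_1} S^1\times D^3$ is a Weinstein $1$-handlebody filling $(\#_{k_1} S^1\times S^2,\xi_{std})$. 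As in the closed trisection story, the design principle is that \emph{both} sectors $W_1,W_2$ are $1$-handlebodies and the $2$-handle topology is carried entirely by the change between $C_2$ and $C_3$; the underlying smooth bisection is guaranteed by the smooth theory, and the task is to upgrade it to carry the dividing set.

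First I would build the spine side. Form a connected Legendrian graph $\Lambdatil$ containing $\Lambda$ by joining the link components with Legendrian arcs and incorporating the cores of the $1$-handles, and set $H_1:=\nu(\Lambdatil)$, a standard contact handlebody with $\partial H_1=\Sigma$ the core surface. Reading the front projection of $\Lambdatil$ through the local models of Figure~\ref{fig:localDividingSetModels}, draw $d=\Gamma_\Sigma$ together with the cut system $C_1$ given by meridians of the edges of $\Lambdatil$; by Lemma~\ref{l:contHeeg} each such meridian meets $d$ twice, so $(\Sigma,d,C_1)$ records a genuine standard contact handlebody.

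Next I would handle the complementary side and the $2$-handles. Verify that the complement $H_2$ of $H_1$ inside $(\#_{k_1} S^1\times S^2,\xi_{std})$ is itself a standard contact handlebody by exhibiting, from the explicit dividing-set picture, a cut system $C_2$ each of whose curves meets $d$ twice; by characterization (3) of Lemma~\ref{l:contHeeg} this makes $H_1\cup_\Sigma H_2$ a contact Heegaard splitting of $\partial W_1$. Since $\Lambda\subset\Lambdatil$ lies on $\Sigma$, attaching the Weinstein $2$-handles along $\Lambda$ is surgery supported along page curves (the contact framing is the page framing minus one), which leaves the binding, hence the dividing set $d$, unchanged; under the standard translation of a $2$-handle attachment into a cut-system modification this replaces $C_2$ by a new system $C_3$ whose curves still meet $d$ twice. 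Thus $H_3$ is standard contact, and both $\partial W=H_1\cup_\Sigma H_3$ and $\partial W_2=H_2\cup_\Sigma H_3$ are contact Heegaard splittings, completing the diagram.

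Finally I would identify the symplectic data: $W_1$ and $W_2$ are Weinstein $1$-handlebodies, and since each $\partial W_i=(\#_{k_i} S^1\times S^2,\xi_{std})$ has a unique filling~\cite{NW}, $\omega|_{W_i}$ is the standard Weinstein filling, verifying the fourth bullet of Definition~\ref{def:WeinsteinMultisection}. The shared interior handlebody $H_2$ receives opposite boundary orientations from $W_1$ and $W_2$, so it must simultaneously be a positive and a negative standard contact handlebody; Lemma~\ref{l:posnegct} provides precisely the orientation-reversing contactomorphism between these on a common page, reconciling the two induced structures. I expect the \textbf{main obstacle} to be producing a single dividing set $d$ that makes all three cut systems contact at once --- concretely, proving that the complementary handlebodies $H_2$ and $H_3$ are standard contact and that the contact-framed $2$-handle attachment preserves the ``meets $d$ twice'' property. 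This is exactly what forces the use of the explicit front-projection dividing-set models, of the page framing, and of auxiliary stabilizations of $\Lambdatil$, and it is also what drives the genus of $\Sigma$ to be large.
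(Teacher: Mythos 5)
Your route is the paper's route (tunnel the attaching link $\Lambda$ into a connected Legendrian graph $\Lambdatil$, read the dividing set off the front projection via the local models, produce the third cut system by $ct-1$ Legendrian surgery, and invoke Lemma~\ref{l:posnegct} for the shared handlebody), but as written it contains a genuine structural error: you have the two handlebodies in reversed roles. The standard neighborhood $\nu(\Lambdatil)$ must be the \emph{central} handlebody $H_2=W_1\cap W_2$, not the outer piece $H_1$, and its complement must be the outer $H_1\subset\partial W$. The reason is that the Weinstein $2$-handles are attached along $\Lambda$, which lies in the Legendrian \emph{spine} of $\nu(\Lambdatil)$: this is exactly what makes $W_2=H_2\times I\cup(\text{2-handles})$ a $4$-dimensional $1$-handlebody (each $2$-handle cancels a $1$-handle of $H_2\times I$ because its attaching circle meets a meridional disk once), and what makes Legendrian surgery act on the \emph{meridional} cut system, replacing each knot meridian by a $ct-1$ pushoff that meets $d$ twice since $d$ is parallel to the contact framing. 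With your assignment ($H_1=\nu(\Lambdatil)$, complement $=H_2$ shared), the step ``this replaces $C_2$ by a new system $C_3$'' fails: $\Lambda$ is not isotopic into the spine of the complement and is not dual to its compressing disks, so $H_2\times I\cup(\text{2-handles})$ need not be a $1$-handlebody, the surgery does not modify $C_2$ at all, and your claimed splitting $\partial W=H_1\cup_\Sigma H_3$ is false because the unsurgered $\nu(\Lambdatil)$ does not survive into $\partial W$ --- it is the complement that persists there, while the neighborhood becomes the surgered handlebody $H_3$. The correct ordering of cut systems is (complement, meridians, surgered meridians), i.e.\ your $C_1$ and $C_2$ must trade places, with $C_3$ obtained from the meridian system. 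A further small slip in the same sentence: the contact framing \emph{equals} the page framing; it is the surgery framing that is the page framing minus one.

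Beyond the relabeling, two steps are thinner than the paper's proof. First, that the complement of $\nu(\Lambdatil)$ is a standard contact handlebody is not secured merely by ``exhibiting a cut system'': it requires the specific tunneling rules --- a tunnel at every crossing, additional tunnels until every bounded region of the front is a $tb=-1$ unknot (so each region boundary has a unique left and right cusp, contributing exactly two intersections with $d$), and the particular arrangement of Figure~\ref{fig:leggraph1h} at each $1$-handle so that the extra compressing curve also meets $d$ twice. You correctly flag this as the main obstacle, but it is the technical heart of the argument, not an afterthought. Second, for the filling condition on $W_2$, appealing to uniqueness of fillings~\cite{NW} only helps once you already know the contact structure induced by $\omega|_{W_2}$ on $\partial W_2$ is the one determined by the splitting $\overline{H_2}\cup_\Sigma H_3$; the paper establishes this directly by identifying $H_2\times I$ with a Weinstein $1$-handlebody whose boundary open book $F\times S^1$ yields the splitting $\overline{H_2}\cup_\Sigma H_2$, and then attaching Weinstein $2$-handles along Legendrians in $H_2$. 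That explicit identification should replace the bare appeal to uniqueness.
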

\label{thm:existence}

\begin{proof}
By definition, a Weinstein $4$-manifold has a Weinstein handle structure. By~\cite{Gompf98}, this handle structure can be represented in a standard form by a Legendrian front projection with $1$-handles (which we will call a Kirby-Weinstein diagram). We will give an algorithm to convert a Kirby-Weinstein diagram in Gompf standard form into a multisection diagram with divides. If we ignore the symplectic and contact structure, the smooth part of this algorithm essentially follows those in \cite{GayKir16} and \cite{MeiSchZup16} used for converting a Kirby diagram into a trisection.

The union of the $0$- and $1$-handles will be $W_1$. This is diffeomorphic to $\natural_{k_i} S^1\times D^3$ and with the Weinstein structure of $W$ restricted to $W_1$, it is a Weinstein filling of its boundary $(\partial W_1,\xi_1)$.

We will construct a contact Heegaard splitting $H_1\cup_\Sigma H_2$ of $(\partial W_1,\xi_1)$ such that the Legendrian attaching spheres for the $2$-handles of $W$ are a subset of the Legendrian core of $H_2$. Then we will define $W_2$ to be a collar of $H_2$ together with the $2$-handles of $W$. Because the attaching spheres of the $2$-handles are a subset of the Legendrian core of $H_2$, we will see that $W_2$ will be diffeomorphic to $\natural_{k_2} S^1\times D^3$. There is a naturally induced Heegaard splitting of $\partial W_2$ given by $\overline{H}_2 \cup H_3$ where $H_3$ is obtained from $H_2$ by doing Legendrian surgery on the attaching spheres of the $2$-handles. We will then show that this is also a contact Heegaard splitting, and that $(W_2, \omega|_{W_2})$ is a symplectic filling of this contact manifold.

Let $\Lambda$ be the Legendrian attaching link for the $2$-handles of $W$ in $\#_{k_1} S^1\times S^2$. To construct the appropriate contact Heegaard splitting $H_1\cup H_2$ of $(\partial W_1,\xi_1)$, we will add Legendrian tunnels to $\Lambda$, yielding a Legendrian graph $\Lambdatil$ containing $\Lambda$. A standard contact neighborhood of $\Lambdatil$ will be $H_2$ and its complement will be $H_1$. The first purpose of the tunnels is to ensure that the complement of the neighborhood of $\Lambdatil$ is a handlebody. Additional tunnels will be added to ensure this handlebody has the standard contact structure.

The construction of $\Lambdatil$ is as follows.
\begin{enumerate}
    \item Start with $\Lambda$ in Gompf standard form.
    \item If there is any $1$-handle of $W$ whose belt sphere is disjoint from $\Lambda$, add a Legendrian circle which passes through that $1$-handle once.
    \item For each $1$-handle add Legendrian arcs to connect all the strands that pass through that $1$-handle on the left and right as shown in Figure~\ref{fig:leggraph1h}.
    \item At each crossing in the diagram, add a Legendrian arc which projects to the crossing point connecting the over- and under-strands. See Figure~\ref{fig:crossings} for generic front projections for a Legendrian isotopic graph.
    \item Add Legendrian arcs to connect disconnected components until the graph is connected.
    \item The resulting front projection divides up the plane into regions, such that the boundary of each region is a Legendrian unknot. Add further Legendrian arcs to cut up each region as in Figure~\ref{fig:algorithmTunnels} so that at the end, each region is bounded by a Legendrian unknot with $tb=-1$. Namely, every region in the front projection should have a unique ``right cusp'' and a unique ``left cusp'' where a vertex is a right (resp. left) cusp of a region if the two edges on the boundary of the region which meet in the vertex both approach the vertex from the left (resp. right). Note that here we can treat each crossing as a single vertex by shrinking the Legendrian arc connecting the two strands by a Legendrian deformation.
\end{enumerate}

\begin{figure}
    \centering
    \includegraphics[scale=.5]{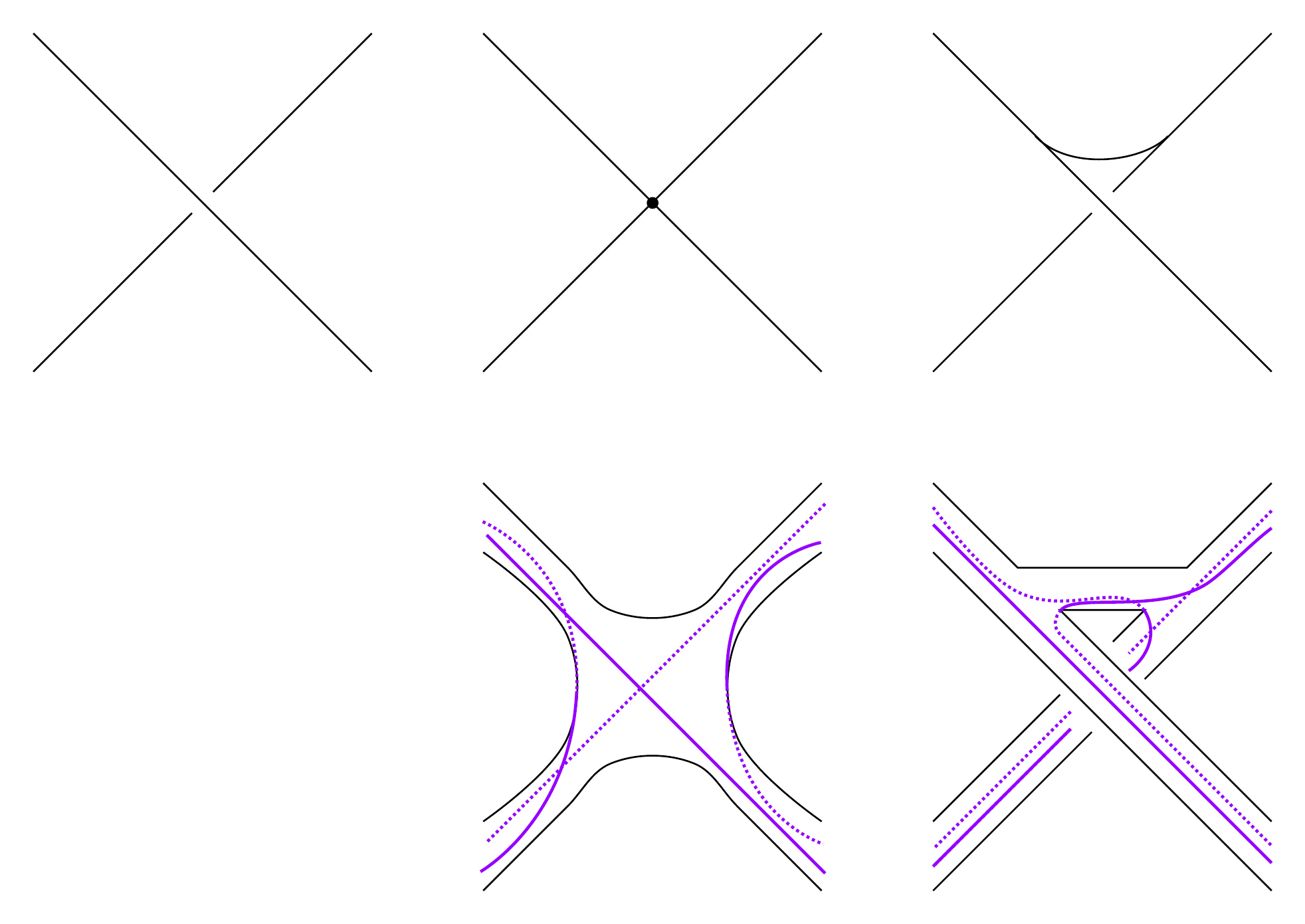}
    \caption{Adding a tunnel at each crossing which projects to the crossing point (top center) is Legendrian isotopic to the generic front projection shown on the top right. The dividing set on the boundary of a neighborhood can be determined using the standard models to obtain the lower right picture, and then isotoped to obtain the lower center picture.}
    \label{fig:crossings}
\end{figure}

\begin{figure}
    \centering
    \includegraphics[scale=.3]{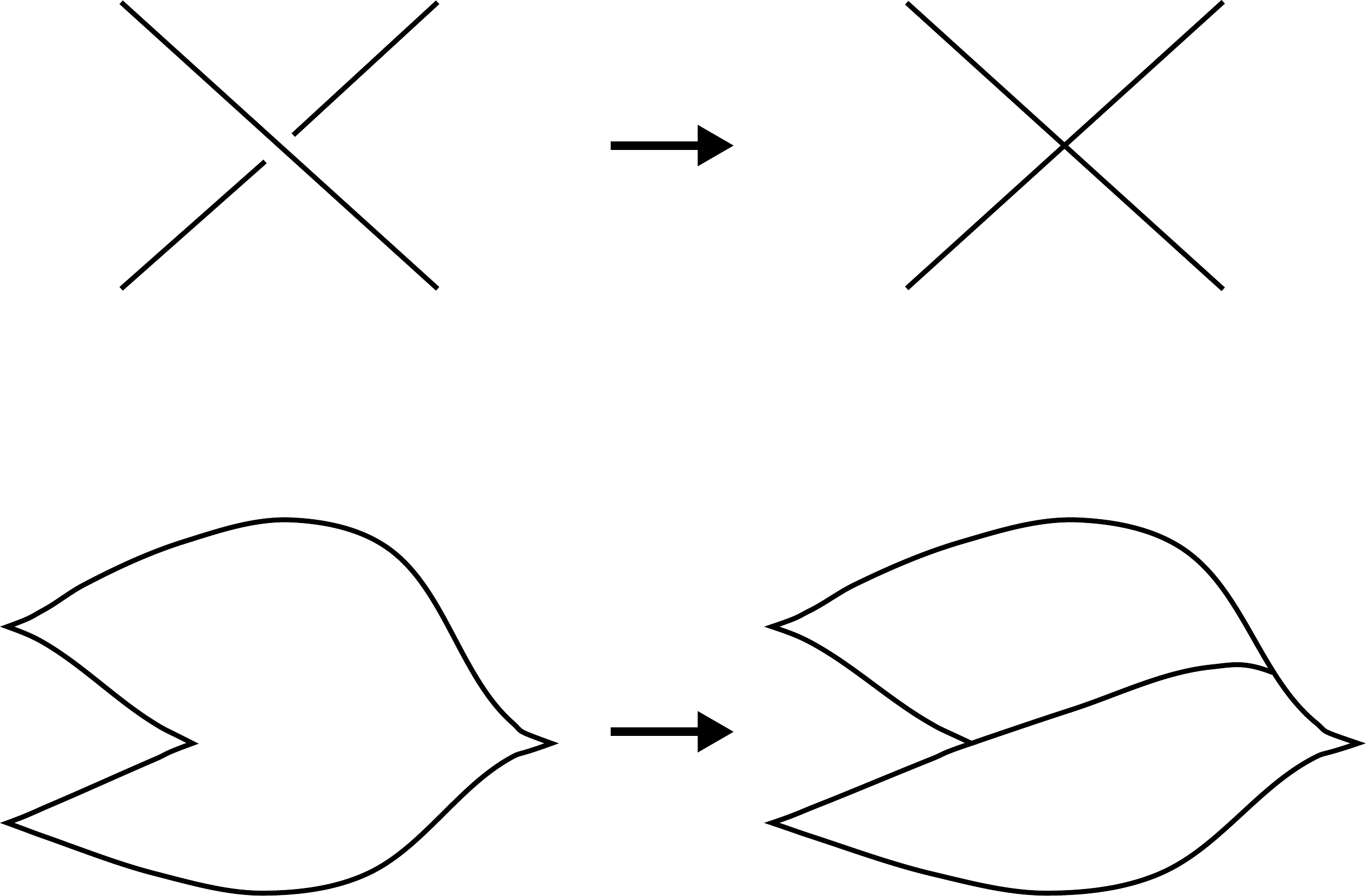}
    \caption{We first flatten each crossing in the knot by asdding a vertical Legendrian tunnel. To make each region a Legendrian unknot with $tb=-1$ we can further add tunnels to regions with additional cusps.}
    \label{fig:algorithmTunnels}
\end{figure}

\begin{figure}
    \centering
    \includegraphics[scale=.8]{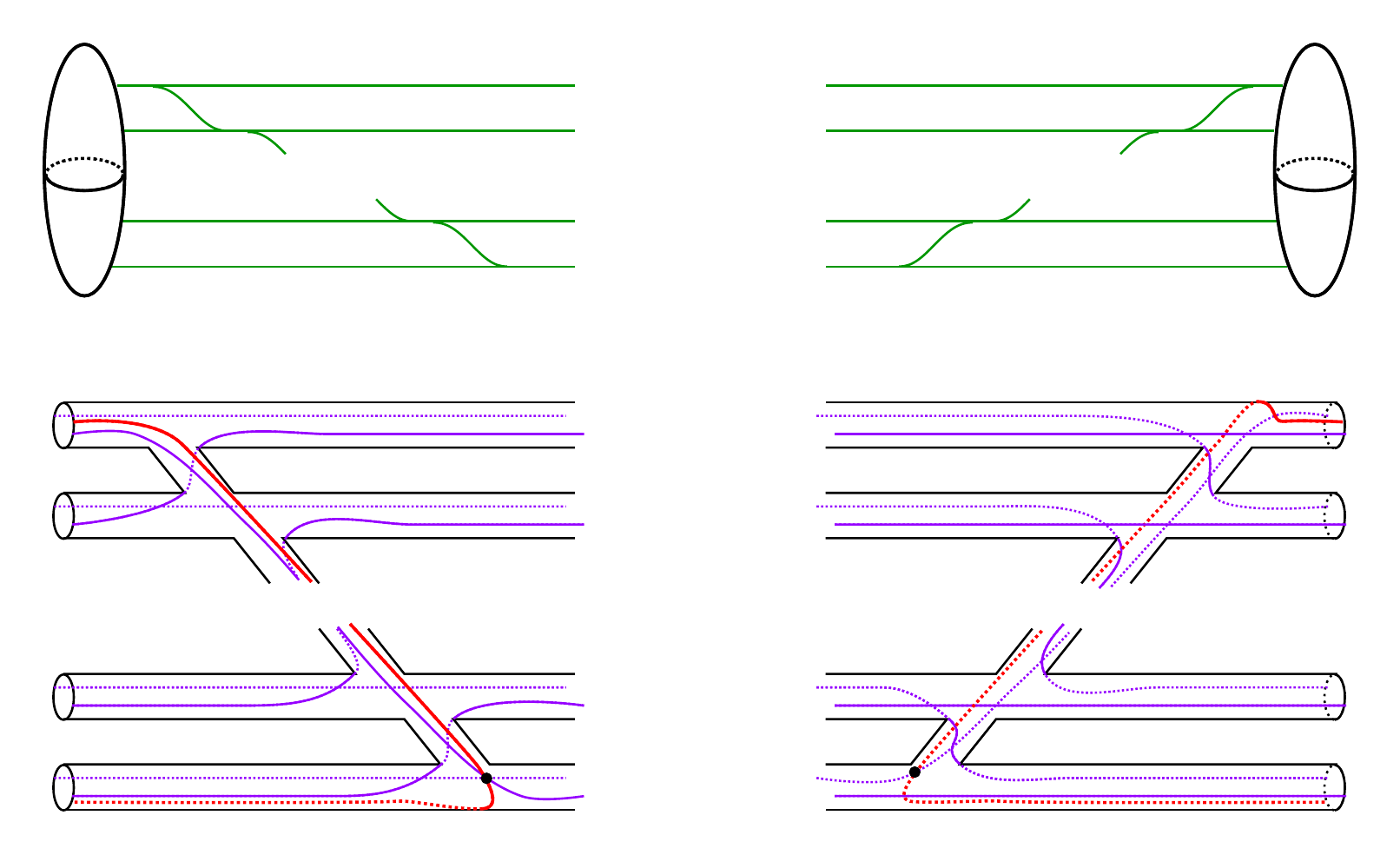}
    \caption{Adding tunnels between parallel strands passing through the same $1$-handle allows us to locate a curve on the boundary of the neighborhood of the Legendrian graph $\Lambdatil$ bounding a disk on the exterior coming from the surgery induced by the $1$-handle. Choosing the arcs in precisely the manner shown ensures that we can find such a curve intersecting the dividing set in exactly two points. The lower part of the figure shows the relevant portion of the Heegaard surface with the dividing set (in purple) and the compressing curve (in red).}
    \label{fig:leggraph1h}
\end{figure}


Now if $H_2$ is a standard contact neighborhood of $\Lambdatil$, and $H_1$ is the complement, we want to show that $H_1\cup H_2$ is a contact Heegaard splitting. If there are no $1$-handles in the Kirby-Weinstein diagram, $H_1$ is a smooth handlebody because we put tunnels at each crossing, so $H_1$ is diffeomorphic to a $3$-ball with one handle attached for each bounded planar region in the diagram of $\Lambdatil$. When there are $1$-handles in the Kirby-Weinstein diagram, there is an additional compressing disk for each $1$-handle as seen in Figure~\ref{fig:1handle}. 

\begin{figure}
    \centering
    \includegraphics[scale=.7]{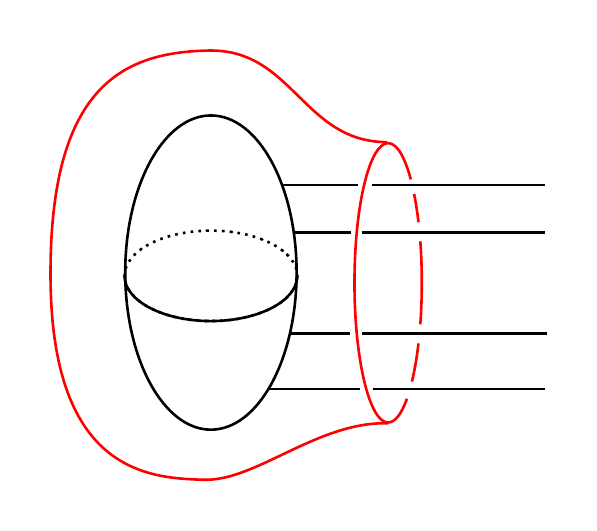}
    \caption{Additional compressing disk for each $1$-handle.}
    \label{fig:1handle}
\end{figure}

Using the models from Figure~\ref{fig:localDividingSetModels}, we can draw the dividing set $d$ on the boundary $\Sigma$ of $H_2$ in terms of the front projection. 
To see that $H_1$ has the standard contact structure, we show that there is a set of compressing curves for $H_1$ on $\Sigma$ such that each curve intersects the dividing set in two points.

Let $\{D_j'\}$ be the bounded regions of the complement of the front projection of $\Lambdatil$, and let $D_j=D_j'\cap H_1$. If there are no $1$-handles in the handle diagram for $W$, then $\{D_j\}$ form a collection of compressing disks for $H_1$, which cut $H_1$ into a ball. If there are $1$-handles, there is also a compressing disk as in Figure~\ref{fig:1handle} for each $1$-handle. After performing handleslides over the regions that pass through the $1$-handle, we can realize this compressing curve as in Figure~\ref{fig:leggraph1h} so that it intersects the dividing set in exactly two points. To see that that boundary of each $D_j$ intersects the dividing set in two points, we use the property from the last step of the construction of $\Lambdatil$, that each region is bounded by a Legendrian unknot with $tb=-1$, meaning there is a unique left cusp (which may be a vertex) and a unique right cusp (which may be a vertex). Any other vertices along the boundary of the region have the two edges on the boundary of this region entering the vertex from different sides. Examining all of the ways that our local models in Figure~\ref{fig:localDividingSetModels} may appear as the boundary of a region, we see that each cusp (either a standard cusp or a vertex cusp) contributes one intersection point between the dividing set $\partial D_j$, and the remaining edges and vertices in the boundary of the region do not contribute any intersections between the dividing set and $\partial D_j$. Thus we see that $H_1$ is a standard contact handlebody.

Next, we look at the second sector $W_2$. We need to show that $W_2\cong \natural_{k_2} S^1\times D^3$, that there is a contact structure $\xi$ on $\partial W_2$ with a contact Heegaard splitting $\overline{H_2}\cup_\Sigma H_3$ such that $(W_2,\omega|_{W_2})$ is a symplectic filling of $(\partial W_2,\xi)$. Recall that $W_2$ is obtained from $H_2\times I$ by attaching $2$-handles along the Legendrian link $\Lambda\subset \Lambdatil$ with framing $ct-1$ where $ct$ is the framing induced by the contact planes. Since $\Lambda$ is embedded in $\Lambdatil$ (the core of $H_2$) $W_2$ is smoothly diffeomorphic to $\natural_{k_2}S^1\times D^3$. A natural Heegaard splitting of $\partial W_2$ is given by $\overline{H_2}\cup_\Sigma H_3$ where $H_3=(H_2)_{ct-1}(\Lambda)$ ($ct-1$ surgery of $H_2$ along $\Lambda$). There is a well-defined contact structure obtained by $ct\pm 1$ Legendrian surgery, which agrees with the contact structure on $H_2$ near $\partial H_3=\Sigma$ (since the surgery is performed on the interior). Therefore the dividing set on $\partial H_3=\Sigma$ can be viewed as the same as the dividing set on $\partial H_2=\Sigma$, but the compressing curves for $H_3$ change based on the surgery. Namely, for each component $\Lambda_k$ of $\Lambda$, the compressing curve changes from a meridian of $\Lambda_k$ to a $ct-1$-framed copy of $\Lambda_k$ on $\Sigma$. Since the dividing set is parallel to the $ct$ framing of $\Lambda_k$, the $ct-1$ framing of $\Lambda_k$ intersects the dividing set exactly twice. Therefore, $H_3$ is a standard contact handlebody with the contact structure induced by Legendrian surgery on $H_2$.. Note that $\overline{H_2}$ is also a standard contact handlebody, using the negative contact structure on $H_2$ from Lemma~\ref{l:posnegct}. Putting these together, we get a contact Heegaard splitting of $\partial W_2$.

It suffices to show that $(W_2,\omega|_{W_2})$ is a Weinstein filling of $\partial W_2$ where the contact structure on $\partial W_2$ is given by the contact Heegaard splitting $\overline{H_2}\cup_\Sigma H_3$. For this, notice that $H_2\times I$ is a $1$-handlebody, and if we restrict $\omega$ to this subset of $W_2$, up to shrinking $I$, the symplectic structure must be a standard neighborhood of the isotropic spine of $H_2$. In other words, $H_2\times I$ with the symplectic structure $\omega$ is symplectomorphic to a Weinstein $1$-handlebody. Moreover, the induced unique tight contact structure on $\partial (H_2\times I)=\#_{k_1}S^1\times S^2$ is supported by the open book $F\times S^1$ (viewing $H_2=F\times I$). This open book with trivial monodromy gives rise exactly to the contact Heegaard splitting $\overline{H_2}\cup H_2$ (where we collapse $I$ along $\partial H_2$). Since $W_2$, (with the contact Heegaard splitting $\overline{H_2}\cup_\Sigma H_3$) is obtained from the Weinstein $1$-handlebody $H_2\times I$ (with the contact Heegaard splitting $\overline{H_2}\cup_\Sigma H_2$) by attaching Weinstein $2$-handles along Legendrian knots in $H_2$, we have that $W_2$ is a Weinstein filling of the contact Heegaard splitting $\overline{H_2}\cup_\Sigma H_3$.

\end{proof}

\subsection{Multisection diagrams with divides} \label{ss:WMdiagrams}

A fundamental feature of multisections with divides is that they can be completely encoded on a surface. In this section we define these diagrams and show how the previous proof gives an algorithm for obtaining a bisection diagram with divides from a Kirby-Weinstein handlebody diagram.

\begin{definition}
A \textbf{multisection diagram with divides} is a closed orientable surface, together with a set of dividing curves $d$ and cut systems $C_1, C_2, ... , C_n$ such that for all $i \in \{1, 2, ... , n-1 \}$
\begin{itemize}

    \item Each curve in $C_i$ intersects $d$ in two points.
    \item $(\Sigma, d, C_i, C_{i+1})$ is a contact Heegaard splitting of the \emph{tight} contact structure on $\#_{k_i} S^1 \times S^2$ for some $k_i \in \mathbb{N}$. 
\end{itemize}
\end{definition}

\begin{remark}
    The condition that each curve in each cut system intersects the dividing set in two points is easily checked. However, it is potentially difficult to check whether the union of two consecutive contact handlebodies forms the \emph{tight} contact structure on $\#_{k_i} S^1\times S^2$.
\end{remark}

The proof of Theorem~\ref{thm:existence} gives an algorithmic method to obtain a multisection diagram with divides as follows.

Starting from a Kirby-Weinstein handlebody diagram, construct the Legendrian graph $\Lambdatil$ as in the proof. Use the models from Figure~\ref{fig:localDividingSetModels} to draw the dividing set $d$ on $\Sigma$ as the boundary of the neighborhood of $\Lambdatil$ ($H_2$) in terms of the front projection.

We can describe cut systems $C_i$ $i\in \{1,2,3\}$ for the handlebodies $H_1$, $H_2$, and $H_3$ as follows. The cut system $C_1$ is given by taking the regions of the planar diagram together with a curve tracing through the vertical tunnels added between the 2-handles passing through each 1-handle, as in Figure~\ref{fig:leggraph1h}--note the curve in this figure intersects the dividing set in two points. As the regions are diagrams of $tb=-1$ unknots, each of the curves given by the boundary of a region intersects the dividing set twice. This ensures that the pair $(\Sigma, C_1)$ is a diagram of a standard contact handlebody.

The cut system $C_2$ is given by taking a meridian of each tunnel together with a meridian of each knot in the Kirby-Weinstein diagram. Using the local model at the top of Figure \ref{fig:localDividingSetModels} and Figure~\ref{fig:crossingtun} we see that these curves intersect the dividing set twice so that $(\Sigma, C_2)$ is also a standard contact handlebody. Because $(\Sigma,C_1,C_2,d)$ represents a Heegaard splitting of the boundary of the $0$- and $1$-handles of the Kirby-Weinstein handlebody diagram, it is symplectically fillable and thus must support the tight contact structure on a connected sum of copies of $S^1 \times S^2$.

\begin{figure}
    \centering
    \includegraphics[scale=.8]{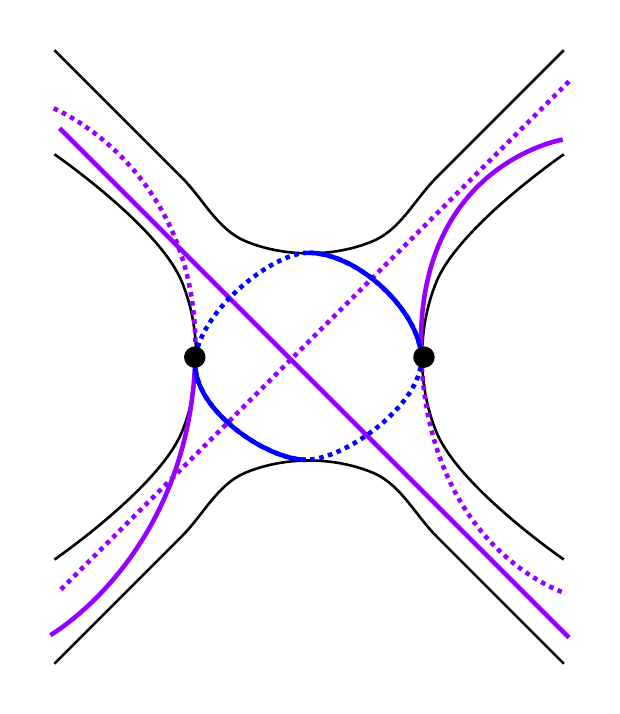}
    \caption{The meridian of the tunnel at a crossing projects as the blue curve in the figure. Note this curve intersects the dividing set at two points.}
    \label{fig:crossingtun}
\end{figure}

To obtain $C_3$, we start by taking a Legendrian push off of each knot component $K_i$ in the diagram. Each such component intersects its chosen meridian in $C_2$ once and does not intersect the dividing set at all. Adding a left handed twist about the meridian to each component $K_i$ gives a curve $K_i'$ whose surface framing is the contact framing $-1$ and which intersects the dividing set twice. Then replacing each meridian in $C_2$ with the corresponding $K_i'$ gives a diagram $(\Sigma, d, C_2, C_3)$ representing a contact Heegaard splitting of a connected sum of copies of $S^1 \times S^2$. Therefore $(\Sigma,d,C_1,C_2,C_3)$ is a bisection diagram with divides of the given manifold.

This algorithm is carried out in Figures \ref{fig:legTrefoilAndTunnels} and \ref{fig:bisectionOfTrefoilTrace} for the result of attaching a Weinstein handle to the max $tb$ right handed trefoil and in Figures \ref{fig:TstarRP2Tunnels} and \ref{fig:TstarRP2Bisection} for the Weinstein domain $T^* \rptwo$, which is a disk bundle over $\rptwo$. 

\begin{figure}
    \centering
    \includegraphics[scale=.23]{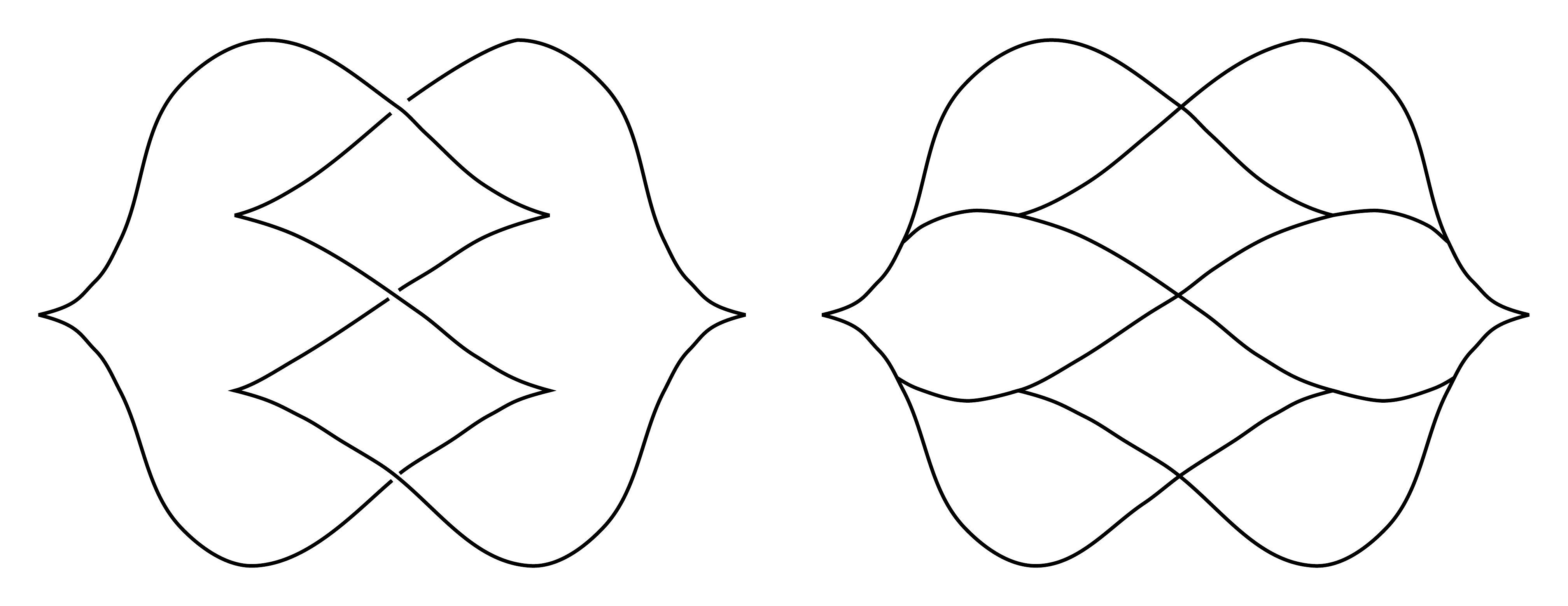}
    \caption{Adding tunnels to the right handed Legendrian trefoil on the left results in the graph on the right. Note that the exterior is unknotted and that each bounded region in the diagram is a $tb=-1$ unknot. }
    \label{fig:legTrefoilAndTunnels}
\end{figure}

\begin{figure}
    \centering
    \includegraphics[scale=.23]{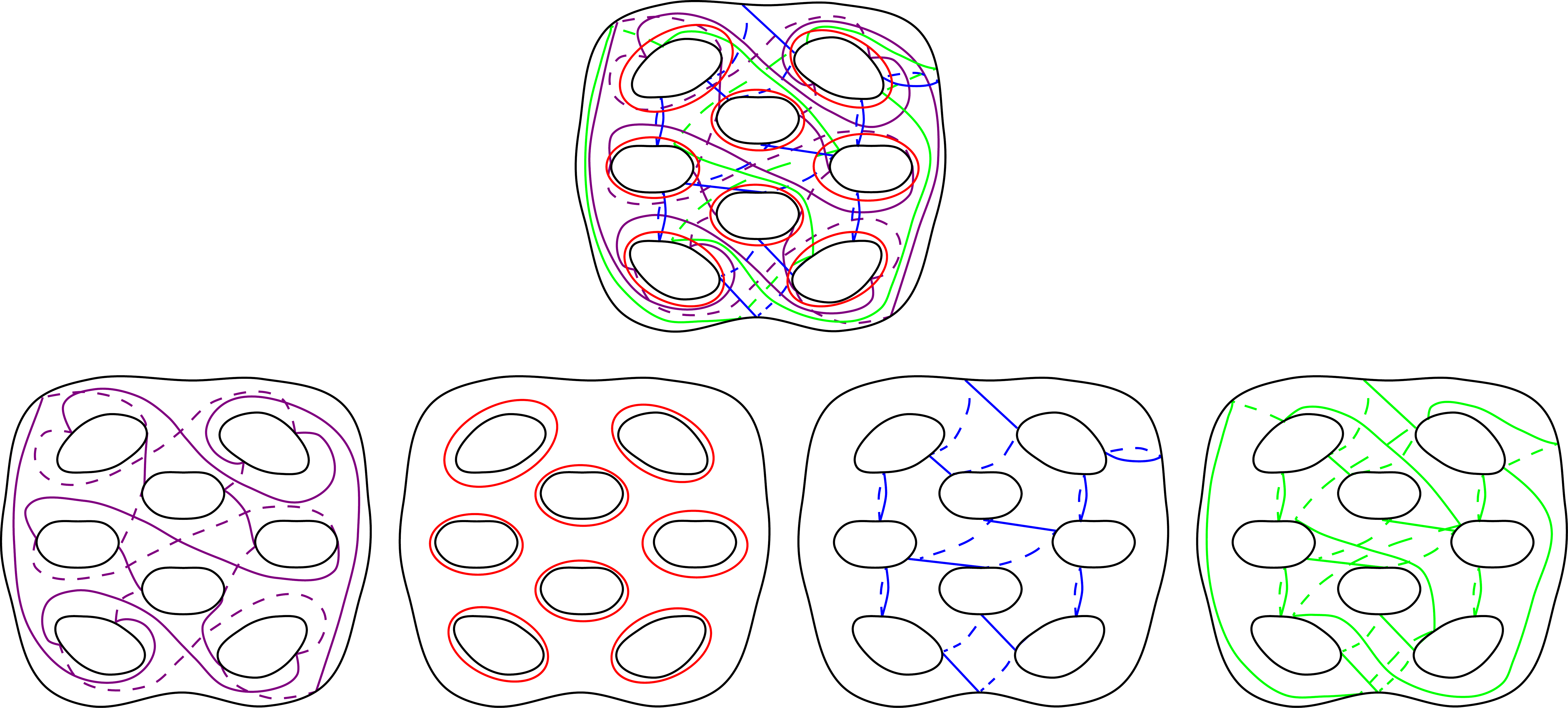}
    \caption{Top: A bisection diagram of the manifold obtained by attaching a Weinstein handle to the max $tb$ trefoil. On this diagram, many green curves which are parallel to blue curves are omitted for visual clarity. Bottom: The dividing set, followed by each of the cut systems in order are drawn out individually.}
    \label{fig:bisectionOfTrefoilTrace}
\end{figure}

\begin{figure}
    \centering
    \includegraphics[scale=.23]{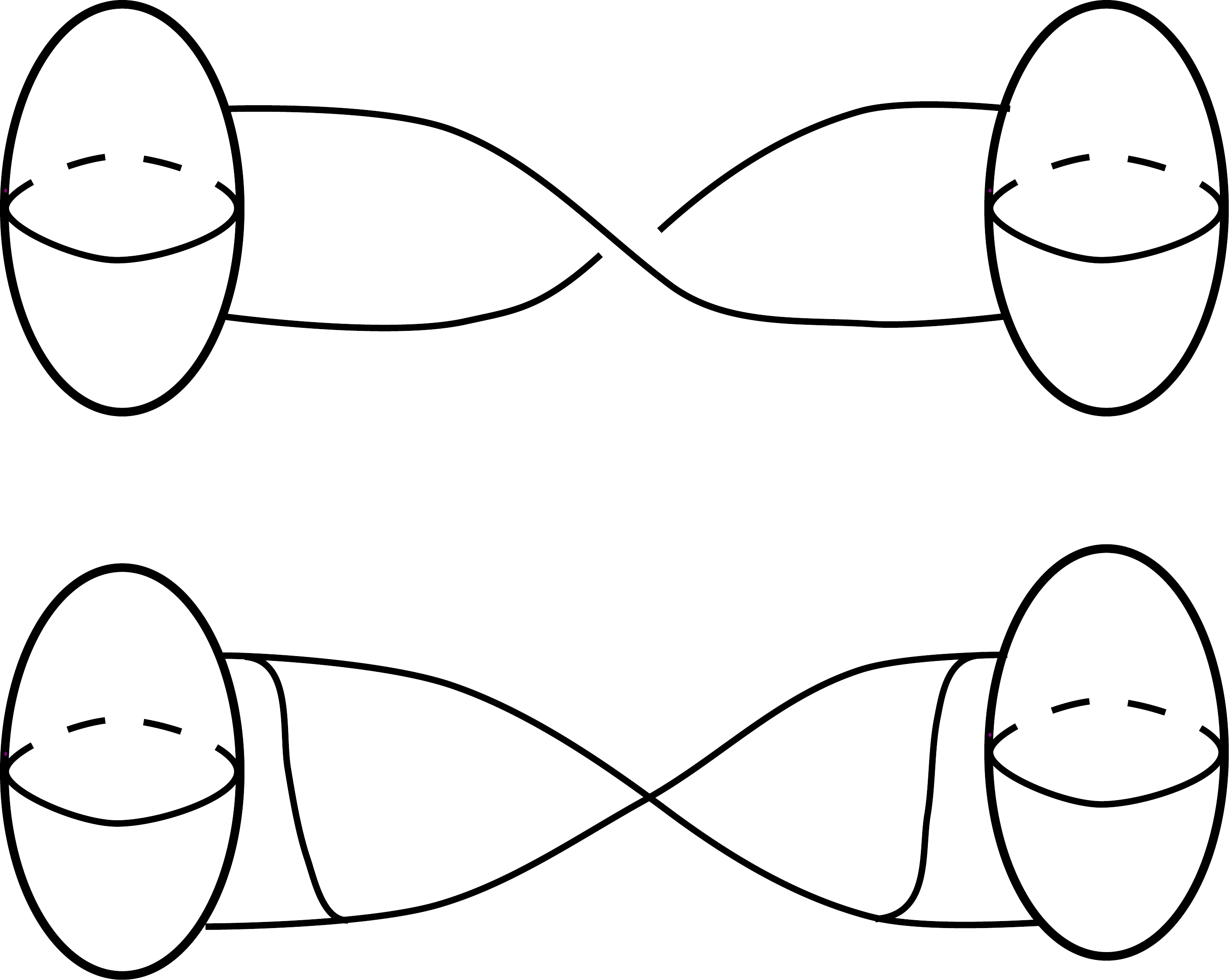}
    \caption{Top: A Kirby-Weinstein diagram for $T^* \mathbb{R}P^2$. Bottom: Following the proof of Theorem \ref{thm:existence} we add Legendrian tunnels to the diagram above so that the exterior of the tunnels drawn is a standard contact handlebody. }
    \label{fig:TstarRP2Tunnels}
\end{figure}

\begin{figure}
    \centering
    \includegraphics[scale=.15]{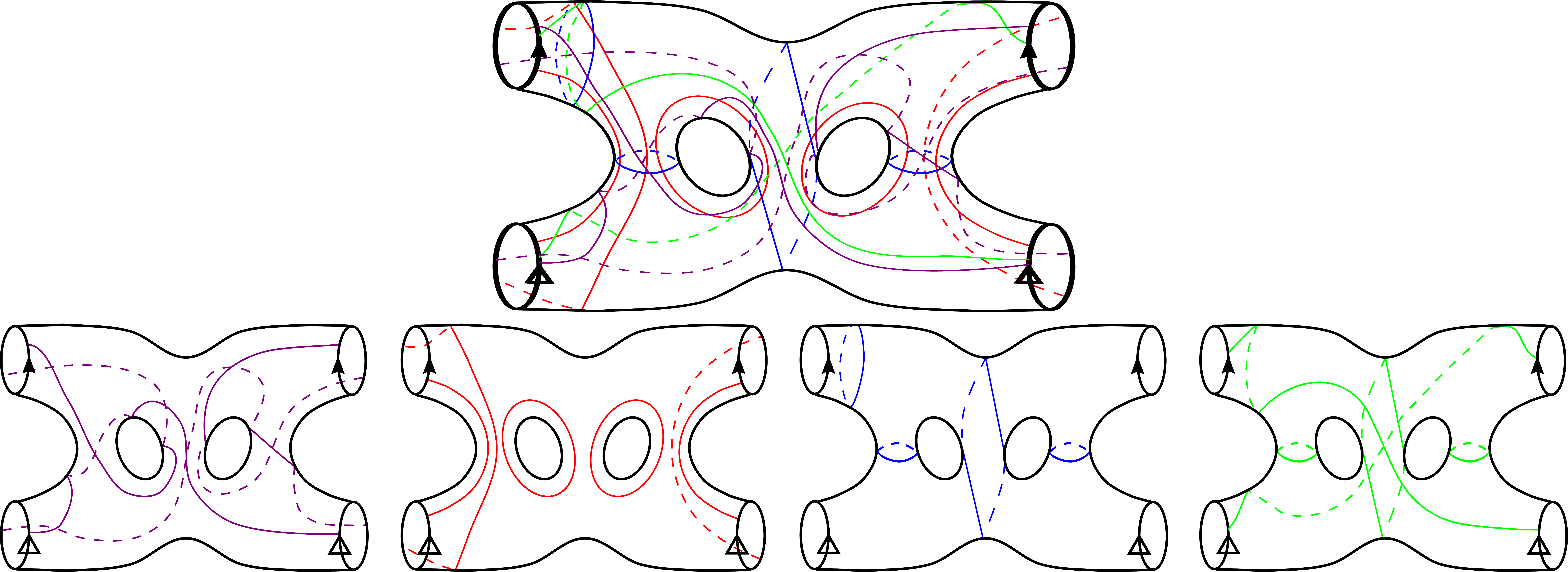}
    \caption{Top: A bisection diagram of $T^* \mathbb{R}P^2$. On this diagram, many green curves which are parallel to blue curves are omitted for visual clarity. Bottom: The dividing set, followed by each of the cut systems in order are drawn out individually.}
    \label{fig:TstarRP2Bisection}
\end{figure}

Note that the cut systems $C_2$ and $C_3$ which are output from our algorithm have a very particular form. More specifically, each component $\gamma$ of $C_3$ either agrees with or is dual to a component $\beta$ of $C_2$. In the latter case, there exists a curve $V$ in $\Sigma$ which is disjoint from the dividing set, dual to the component $\beta$ with respect to $C_2$, such that $\gamma=\tau^{-1}_\beta(V)=\tau_V(\beta)$ where $\tau_A$ is a right-handed Dehn twist about $A$ with respect to the orientation on $\Sigma$ induced as the boundary of $H_2$. Let's call two cut systems related in this way \emph{standard Weinstein cobordant}. By isotoping each $V$ into the Legendrian core of the handlebody $H_2$, we find a Legendrian link in $H_2$ such that the $ct-1$ framing for this link is represented by the corresponding $\gamma$ curves in $C_3$, ($V$ represents the contact framing, and $\beta$ is a meridian of the surgery torus so $\tau^{-1}_\beta(V)$ represents the contact framing $-1$). Thus we see that if two cut systems with a dividing set are standard Weinstein cobordant, the corresponding sector can be endowed with the structure of a Weinstein cobordism from $H_i$ to $H_{i+1}$ obtained by attaching Weinstein $2$-handles to $H_i\times I$. We can always endow the first sector with the structure of a Weinstein $1$-handlebody (since by definition $(\Sigma,d,C_1,C_2)$ is a contact Heegaard splitting of $\#_{k_1} S^1\times S^2$ with the tight (fillable) contact structure). Therefore, we have an (overly strong) condition that ensures a multisection diagram with divides corresponds to a Weinstein $4$-manifold.

\begin{prop}
\label{prop:standardDiagramsAreWeinstein}
Let $(\Sigma,C_1,\dots, C_n,d)$ be a multisection diagram with divides such that $(C_i,C_{i+1},d)$ are standard Weinstein cobordant for $1<i<n$. Then $(\Sigma,C_1,\dots, C_n,d)$ corresponds to a Weinstein $4$-manifold.
\end{prop}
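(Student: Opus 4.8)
The plan is to construct the Weinstein structure on the $4$-manifold $W$ reconstructed from the diagram one sector at a time, exactly mirroring the sector-by-sector construction in the proof of Theorem~\ref{thm:existence}, and then to glue the pieces across the interior handlebodies using the positive/negative standard contact handlebody correspondence of Lemma~\ref{l:posnegct}. Throughout, the sector $W_i$ is the piece recorded by the consecutive pair $(C_i,C_{i+1},d)$, and $H_i$ (the handlebody determined by $C_i$ with dividing set $d$) is shared between $W_{i-1}$ and $W_i$.

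First I would treat the initial sector. By the definition of a multisection diagram with divides, $(\Sigma,d,C_1,C_2)$ is a contact Heegaard diagram for the \emph{tight} contact structure on $\#_{k_1}S^1\times S^2$, and this contact manifold has a unique Weinstein filling up to deformation, namely the standard $1$-handlebody $\natural_{k_1}S^1\times D^3$ (Theorem~16.9(c) of~\cite{CieEli12}). I endow $W_1$ with this Weinstein $1$-handlebody structure; its boundary contact structure is supported by the open book with page a half of $\Sigma\setminus d$ and binding $d$, which is precisely the splitting $H_1\cup_\Sigma H_2$.

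Next I would realize each remaining sector as a Weinstein cobordism. Fix $i$ with $1<i<n$. Since $(C_i,C_{i+1},d)$ is standard Weinstein cobordant, for every component $\gamma$ of $C_{i+1}$ dual (rather than equal) to a component $\beta$ of $C_i$ there is a curve $V$ disjoint from $d$, dual to $\beta$, with $\gamma=\tau_\beta^{-1}(V)$. Because $V$ misses $d$ it realizes the contact framing, so isotoping each such $V$ into the Legendrian core of $H_i$ yields a Legendrian link $\Lambda_i\subset H_i$. I then build $W_i$ as $H_i\times I$ with Weinstein $2$-handles attached along $\Lambda_i$ with framing $ct-1$. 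As in the analysis of $W_2$ in Theorem~\ref{thm:existence}, $H_i\times I$ is a Weinstein $1$-handlebody whose boundary carries the trivial open book realizing $\overline{H_i}\cup_\Sigma H_i$, and attaching the $2$-handles produces a Weinstein cobordism whose upper handlebody has compressing curves $\tau_\beta^{-1}(V)=\gamma$; hence the upper splitting of $W_i$ is the contact Heegaard splitting $\overline{H_i}\cup_\Sigma H_{i+1}$ recorded by $(C_i,C_{i+1},d)$.

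Finally I would glue the sectors. The handlebody $H_i$ occurs as the upper boundary piece of $W_{i-1}$, where it carries the positive standard contact structure $\xi^+$, and as the lower boundary piece $\overline{H_i}$ of $W_i$, where it carries the negative one $\xi^-$; by Lemma~\ref{l:posnegct} and the following remark these are supported by the same half open book and are orientation-reversing contactomorphic via $t\mapsto -t$. Arranging each sector's Liouville form to be standard, $d(e^s\alpha)$, in a collar of this shared face, the symplectic forms, Liouville fields, and Morse functions match across every $H_i$ and assemble into a global Weinstein structure on $W=W_1\cup\cdots\cup W_{n-1}$ realizing the given diagram. The hard part will be exactly this gluing: one must check that the contact germs induced on each interior handlebody by the two adjacent sectors agree after the orientation reversal, which is what Lemma~\ref{l:posnegct} supplies, and that the collar models can be matched so that the Liouville field remains gradient-like for a single global Morse function.
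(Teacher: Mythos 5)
Your proposal is correct and follows essentially the same route as the paper, whose ``proof'' is the discussion paragraph preceding the proposition: the first sector is the unique Weinstein filling of the tight $\#_{k_1}S^1\times S^2$, and each standard Weinstein cobordant pair is realized by isotoping the $V$ curves (which miss $d$ and hence carry the contact framing) into the Legendrian core of $H_i$ and attaching Weinstein $2$-handles with framing $ct-1$ to $H_i\times I$. Your final paragraph on matching collars via Lemma~\ref{l:posnegct} merely makes explicit the gluing the paper leaves implicit in its inductive handle-attachment picture, as in the analysis of $W_2$ in Theorem~\ref{thm:existence}.
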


In Section~\ref{s:PALF}, we will see another condition that ensures a multisection diagram with divides corresponds to a Weinstein $4$-manifold. It is an interesting question to ask whether there is a general characterization of all multisection diagrams with divides which correspond to Weinstein $4$-manifolds. In general, we only expect a multisection diagram with divides to correspond to a symplectic $4$-manifold, which may not admit any global Weinstein structure.

\section{PALFs, monodromy substitution and multisections with divides} \label{s:PALF}
\subsection{PALFS and monodromy factorizations}
Fibration structures on symplectic manifolds have a long history of study, dating to Donaldson's work in~\cite{Don99} where it was shown that every closed symplectic 4-manifold admits a Lefschetz pencil. Conversely, Gompf proved that every 4-manifold with a Lefschetz pencil admits a symplectic structure~\cite{GompfStipsicz}. The corresponding objects in the Weinstein category are positive allowable Lefschetz fibrations.

\begin{definition}
A \emph{Lefschetz fibration} on $X$ is a map $\pi: X\to B$ to a surface $B$ such that near each critical point of $\pi$, there are local orientation preserving coordinates such that $\pi$ is modeled on $(z_1,z_2)\mapsto z_1z_2$. A \textbf{positive allowable Lefschetz fibration} (PALF) is a Lefschetz fibration whose base $B$ is $D^2$ and whose regular fiber is a compact surface with boundary such that every vanishing cycle is homologically essential (allowable). 
\end{definition}

Following Loi and Piergallini \cite{LoiPie01}, every Weinstein domain admits a PALF. Conversely, every PALF supports a Weinstein structure~\cite{GompfStipsicz}. 

In this section, we will show how to use the PALF structure to obtain a decomposition of a Weinstein domain as a multisection with divides. As in Figure~\ref{fig:palfToMultisection}, we cut the disk $D^2$ into closed subdisks, $D_1,\dots, D_n$, such that 
\begin{itemize}
    \item Each $D_i$ contains a unique critical value of $\pi$,
    \item $D_i\cap D_{i+1}$ is diffeomorphic to an interval for $i=1,\cdots, n-1$,
    \item $\partial D^2\subset X_1\cup X_n$, and
    \item $D_1\cap \cdots \cap D_n = \{(0,1),(0,-1)\}$.
\end{itemize}
Then let $X_i = \pi^{-1}(D_i)$, $H_1 = \pi^{-1}(\partial D^2\cap D_1)$, $H_i = \pi^{-1}(D_{i-1}\cap D_i)$ for $i=2,\dots, n$, $H_{n+1}=\pi^{-1}(\partial D^2\cap D_n)$. Then $H_i = X_{i-1}\cap X_i$ and $X_1\cap \cdots \cap X_n = \partial H_i$ (after quotienting by the $D^2$ factor along points in $\partial F\times D^2$ which is a Weinstein homotopic domain). 

\begin{theorem} \label{thm:palfToMultisection}
 The decomposition $(X_1,\dots, X_n)$ is a multisection with divides for $X$.
\end{theorem}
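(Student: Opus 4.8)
The plan is to verify each bullet point of Definition~\ref{def:WeinsteinMultisection} in turn, using the PALF structure $\pi: X \to D^2$ and the decomposition of the base disk into the subdisks $D_1, \dots, D_n$. The underlying smooth multisection structure is essentially the standard way one reads a Lefschetz fibration as a handle decomposition, so the genuinely new content is the \emph{contact/symplectic} compatibility encoded in the last three bullets. I would organize the proof around the local model of a Lefschetz fibration over a disk containing a single critical value.

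\smallskip

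First I would establish the smooth and topological claims. Since each $D_i$ is a disk containing a unique critical value, $X_i = \pi^{-1}(D_i)$ is obtained from $\pi^{-1}(\text{arc}) \cong F \times D^2$ by attaching a single Weinstein $2$-handle along the vanishing cycle; because the fiber $F$ has genus $g$ and $b$ boundary components, this identifies $X_i \cong \natural_{k_i} S^1 \times D^3$ for the appropriate $k_i$ (this also recovers the genus $2g+b-1$ count of the companion theorem, as the core surface $\Sigma$ is the double of $F$). The pieces $H_i = \pi^{-1}(D_{i-1} \cap D_i)$ are preimages of arcs, hence products $F \times I$, i.e.\ handlebodies $\natural_g S^1 \times D^2$, and $\Sigma = X_1 \cap \cdots \cap X_n$ is the preimage of the common intersection points, doubled appropriately to a closed surface. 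These verify the first three bullets.

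\smallskip

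Next I would address the symplectic filling condition (fourth bullet) and the two contact Heegaard splitting conditions (fifth and sixth bullets), which is where the real work lies. Restricting the Weinstein structure on $X$ (supported by the PALF via~\cite{LoiPie01,GompfStipsicz}) to each $X_i$ gives a Weinstein structure making $(X_i, \omega|_{X_i})$ a filling of its boundary; combined with the classification of fillings of $\#_{k_i} S^1 \times S^2$ from the earlier remark, this gives the fourth bullet. For the contact Heegaard splittings, the key observation is that $H_i$ and $H_{i+1}$, being the two fiber-product halves $F \times I$ over the two sub-arcs of $\partial D_i$, are precisely the two halves of the open book on $\partial X_i$ whose page is $F$ and whose monodromy is the product of the right-handed Dehn twists about the vanishing cycles inside $D_i$ (here a single twist, since $D_i$ contains one critical value). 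By characterization (2) of Lemma~\ref{l:contHeeg}, two halves of a supporting open book form a contact Heegaard splitting, so $H_i \cup H_{i+1}$ is a contact Heegaard splitting of $(\partial X_i, \xi_i)$. The same argument applied to the outer boundary $\partial D^2$, where $H_1 \cup H_{n+1}$ are the two halves of the open book supporting the induced contact structure on $(\partial X, \xi)$ (the Giroux open book of the full PALF with monodromy the total product of Dehn twists), gives the sixth bullet.

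\smallskip

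The main obstacle I anticipate is not any single bullet but the careful bookkeeping of \emph{how the page $F$ and the dividing set sit inside the closed core surface $\Sigma$}, and checking that the open books appearing on the various $\partial X_i$ are genuinely \emph{supporting} open books in Giroux's sense (positive binding, $d\alpha > 0$ on pages) rather than merely topological fibrations. Because the fibration is \emph{positive} and \emph{allowable}, the monodromy is a product of right-handed Dehn twists about homologically essential curves, which is exactly what guarantees the supporting condition and the positivity (Weinstein-ness) of each sector; I would want to invoke Lemma~\ref{l:posnegct} to see that the shared interior handlebody $H_i$ carries the requisite positive structure from one side and negative from the other, so that it can simultaneously bound $X_{i-1}$ and $X_i$ with opposite boundary orientations. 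Assembling these compatibly across all the gluing arcs, and confirming that the trivial-monodromy product region $F \times D^2$ away from critical values interpolates correctly, is the delicate step that ties the local models together into the global decomposition.
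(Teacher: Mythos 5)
Your proposal follows the paper's proof almost step for step: the same sectors $X_i=\pi^{-1}(D_i)$, the same identification of $H_i$ and $H_{i+1}$ with the two halves of the open book obtained by restricting $\pi$ to $\partial X_i$, the same appeal to characterization (2) of Lemma~\ref{l:contHeeg} to get the contact Heegaard splittings (including on $\partial X$), and the same observation that the restricted PALF induces a Weinstein structure on each $X_i$ agreeing with $\omega$. But there is one genuine gap, and it sits at the first bullet: you assert $X_i\cong \natural_{k_i} S^1\times D^3$ ``because the fiber $F$ has genus $g$ and $b$ boundary components.'' That is not a justification. Smoothly, $X_i$ is $H_i\times I\cong \natural_{2g+b-1}S^1\times D^3$ with a single $2$-handle attached along the vanishing cycle $c_i$ (page framing $-1$), and attaching a $2$-handle to a $1$-handlebody does not in general return a $1$-handlebody: if $c_i$ bounded a disk in $F$, the result would contain an essential $2$-sphere and the first bullet would fail. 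This is exactly where \emph{allowability} is consumed in the paper's proof: since $c_i$ is homologically essential in $F$, Poincar\'e--Lefschetz duality produces a properly embedded arc $a_i\subset F$ with $|a_i\cap c_i|=1$, so the product disk $\Psi_i(a_i\times I)$ is a meridional disk of $H_i$ meeting $c_i$ once, the $2$-handle cancels a $1$-handle, and $X_i$ is a $4$-dimensional $1$-handlebody. You do invoke allowability, but only in your final paragraph and for the wrong purpose: you credit it with ``the supporting condition and the positivity (Weinstein-ness) of each sector,'' whereas positivity of the twists is what yields the Weinstein/supporting structure, and allowability is what rescues the smooth claim above. As written, your argument would go through verbatim for a non-allowable positive Lefschetz fibration, for which the conclusion is false.

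Two smaller remarks. Invoking the classification of fillings of $\#_{k_i}S^1\times S^2$ for the fourth bullet is unnecessary: once the restricted PALF endows $X_i$ with a Weinstein structure compatible with $\omega|_{X_i}$, it is a filling of its boundary directly. And your anticipated ``delicate step'' involving Lemma~\ref{l:posnegct} is not actually needed in the proof: the contact Heegaard splitting condition is verified sector by sector via the open book on each $\partial X_i$, and Lemma~\ref{l:posnegct} serves only as background guaranteeing that a single interior handlebody can consistently appear with both boundary orientations, which the definition already accommodates.
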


\begin{proof}
Each $H_i$ is a $3$-dimensional handlebody since it is diffeomorphic to $F\times I$. Thus to check this is a multisection with divides it suffices to check that (1) each $X_i$ is symplectomorphic to a $4$-dimensional Weinstein $1$-handlebody and (2) that $H_i\cup H_{i+1}$ is a contact Heegaard splitting of $\partial X_i$. 

First we look at each $X_i$. We will use $F:=\pi^{-1}((0,1))$ as the regular fiber. The vanishing cycles are curves $(c_1,\dots, c_n)$ in $F$ which collapse to the critical point under parallel transport from $(0,1)$ to the critical value.
The model for Lefschetz singularities shows that $X_i$ is diffeomorphic to the manifold obtained from $H_i\times I$ by attaching a $2$-handle along $c_i$ with framing given by one less than the page framing. $H_i\times I$ is certainly a $1$-handlebody, so the result will still be a $4$-dimensional $1$-handlebody if this $2$-handle cancels with one of the $1$-handles of $H_i\times I$. Thus to see that $X_i$ is diffeomorphic to a $1$-handlebody it suffices to check that there is a meridional disk in $H_i$ which intersects $c_i$ exactly once.

As $\pi$ is a PALF, $c_i \subset F$ is homologically essential. Thus, by Poincare-Lefschetz duality, there exists an arc $a_i\subset F$ such that $|a_i \cap c_i| = 1$. Parallel transport defines diffeomorphisms $\Psi_i: F\times I \to H_i$ which identify $F\times \{0\}$ with $F\subset H_i$. Then $\Psi_i(a\times I)$ is a meridional disk of $H_i$ which intersects $c_i\subset F=\Psi_i(F\times \{0\})$ at a single point. Thus $X_i$ is a $4$-dimensional $1$-handlebody.

Since $X_i=\pi^{-1}(D_i)$ and $D_i$ is a disk, $X_i$ has the structure of a Weinstein manifold induced by the (restricted) PALF. This agrees with the symplectic structure on $X$ since both are compatible with the Lefschetz fibration.

To see that $H_i\cup H_{i+1}$ gives a contact Heegaard splitting of $\partial X_i$ with the contact structure induced by the Weinstein structure on $X_i$, we use the open book construction of contact Heegaard splittings. Restricting $\pi$ to $\partial X_i$ gives an open book decomposition of $\partial X_i$ which supports the contact structure induced by the Weinstein structure on $X_i$ since the Weinstein structure comes from the Lefschetz fibration structure. $H_i$ and $H_{i+1}$ are precisely the two halves of the open book which give a contact Heegaard splitting. 
\end{proof}

\begin{remark}
Using the fact that all Weinstein manifolds are supported by Lefschetz fibrations, Theorem~\ref{thm:palfToMultisection} gives a similar result to Theorem~\ref{thm:existence}. The main difference is that Theorem~\ref{thm:existence} yields a \emph{bisection}, whereas Theorem~\ref{thm:palfToMultisection} will usually have many more than two sectors.
\end{remark}

\subsection{Multisection diagrams with divides from a PALF}
A PALF can be encoded combinatorially through the fiber surface $F$ and the ordered set of vanishing cycles $(c_1,\dots, c_n)$. In this section we show how to use the combinatorial data of a PALF to obtain the combinatorial data of the multisection diagram with divides corresponding to the decomposition from Theorem~\ref{thm:palfToMultisection}. 

The monodromy about a Lefschetz critical value with vanishing cycle $c$ is a right handed Dehn twist about $c$, which we denote by $\tau(c)$. Thus a PALF can equivalently be encoded by an ordered sequence of right handed Dehn twists about the vanishing cycles called a \textbf{monodromy factorization}. 

The core surface of the multisection $\Sigma$ is diffeomorphic to the union of two copies of $F$ glued together along their boundary. The dividing set on $\Sigma$ is given by the boundary of $F$. More precisely, if $\Psi_i: F\times I\to H_i$ is the diffeomorphism defined by parallel transport, $\Sigma = \Psi_i (F\times \{0\} \cup F\times \{1\}) = F\cup \Psi_i(F\times \{1\})$ (note we are suppressing the quotient of the $I$ direction at points in $\partial F$).

To understand a multisection diagram with divides, we want to fix an identification of $\Sigma$, and then draw cut systems for each handlebody $H_i$. We will use $\Psi_1$ to identify $\Sigma=\partial H_1$ as $F_0\cup F_1$. Then the restriction of $\Psi_i$ to $F_0\cup F_1$ gives a diffeomorphism from $\Sigma$ to $\partial H_i$.

Let $\{a_1,\dots, a_k\}$ be a complete arc system for $F$ i.e. a collection of properly embedded arcs which cut $F$ into a disk.  Then $\{\Psi_i(a_1\times I),\dots, \Psi_i(a_k\times I)\}$ gives a cut system of disks for $H_i$. We want to see the boundaries of these disks on our fixed identification of $\Sigma$. Namely, we want to describe the curves $\Psi_i^{-1}(a_j\times\{0\} \cup a_j\times\{1\})$ in $F_0\cup F_1$. Each $\Psi_i$ is the identity on $F_0$, and defines parallel transport along the arc from $(0,1)$ to $(0,-1)$ over which $H_i$ lies. Since the $\Psi_i$ define parallel transport, and we are using $\Psi_1$ to identify $\Sigma$ with $F_0\cup F_1$, we see that $\Psi_i(a_j\times\{1\})$ in $\Sigma$ is the image of $a_j$ under the monodromy around the curve which goes from $(0,-1)$ to $(0,1)$ along the $H_i$ curve and then goes from $(0,1)$ to $(0,-1)$ along the $H_1$ curve. This monodromy is $\tau(c_1),..., \tau(c_{i-1})$. Thus, the cut system for $H_i$ is obtained from the cut system for $H_{i-1}$ by applying the right handed Dehn twist $\tau(c_{i-1})$ where $c_{i-1}\subset F_1$. Any choice of complete arc system defines a cut system for $H_1$ by gluing together the same arcs on $F_0$ and $F_1$.

To summarize, given a PALF with fiber $F$ and ordered vanishing cycles $\{c_1,\dots, c_n\}$, the corresponding multisection with divides is given by

\begin{itemize}
    \item $\Sigma = F_0\cup_\partial F_1$ where $F_0$ and $F_1$ are copies of $F$ ($F_0$ is oppositely oriented).
    \item The dividing set $d$ is $\partial F = F_0\cap F_1$.
    \item The cut system for $H_1$ is $\{ a_1\cup a_1, \dots, a_g\cup a_g\}$ where $\{a_1,\dots, a_g\}$ is a complete arc system for $F$.
    \item For $i>1$, the cut system for $H_i$ is obtained from the cut system for $H_{i-1}$ by applying a right handed Dehn twist about $c_{i-1}\subset F_1$.
\end{itemize}

Note that each cut curve intersects the dividing set in exactly two points (the end points of the arc $a_j$). (This is clearly true for $H_1$, and it remains true for each $H_i$ because the Dehn twists are applied in the interior of $F_1$ which is disjoint from the dividing set.)

\begin{figure}
    \centering
    \includegraphics[scale =.35]{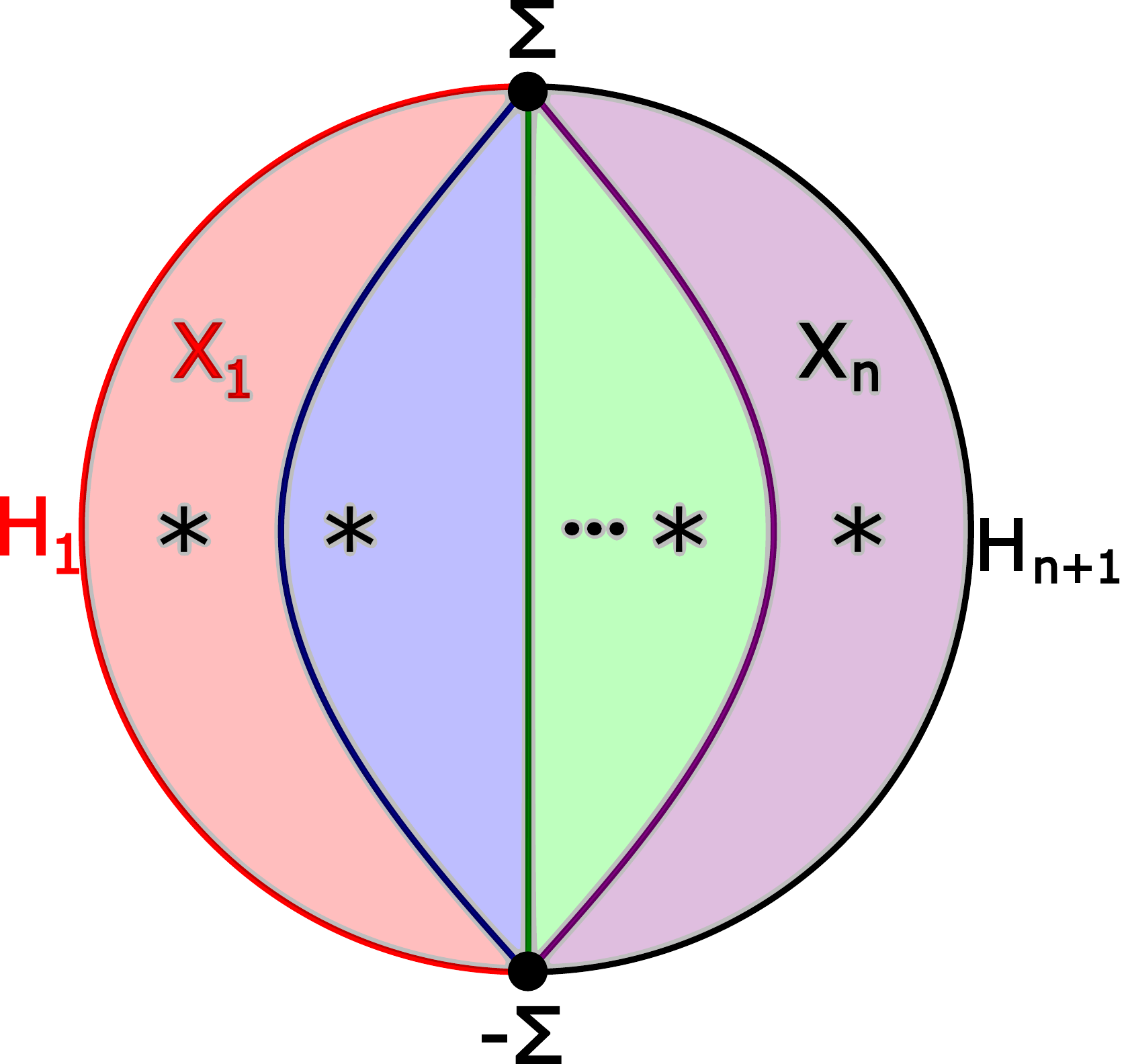}
    \caption{Decomposing a PALF into pieces containing exactly one Lefschetz singularity yields a multisection. The multisection surface is two fiber surfaces glued along their binding and each handlebody is a product region between these two surfaces.}
    \label{fig:palfToMultisection}
\end{figure}

\begin{remark} \label{rem:PALFstd}
    Note that the output of a PALF yields a slightly more general condition which ensures that a multisection with divides corresponds to a Weinstein manifold. In this case, every consecutive pair of cut systems differs by a right handed Dehn twist about a curve which lies entirely in the $\Sigma_+$ side of the dividing set. Let's call such $(C_i,C_{i+1},d)$ \emph{generalized standard Weinstein cobordant} (gsWc). (This generalizes the notion of standard Weinstein cobordant from Proposition~\ref{prop:standardDiagramsAreWeinstein}.) If consecutive cut systems $C_i$ and $C_{i+1}$ are gsWc where the curve defining the Dehn twist relating them is dual to one of the components of $C_i$, they define a smooth multisection by Lemma~\ref{lem:twistingOnDualCurvesGivesMultisection}. Furthermore, whenever we have gsWc cut systems representing handlebodies $H_i$ and $H_{i+1}$ forming the Heegaard splitting on the boundary of a sector $W_i$, we can again interpret $W_i$ as a Weinstein cobordism from $H_i$ to $H_{i+1}$. This is because we can use the Lefschetz fibration interpretation of $W_i$ and then view the Lefschetz critical point as an attachment of a Weinstein $2$-handle to $H_i\times I$. Since $H_i\times I$ can either be interpreted as a filling or a Weinstein cobordism, the sector $W_i$ also supports a Weinstein structure making it a filling of its boundary, and a Weinstein structure making it a cobordism from $H_i$ to $H_{i+1}$. This shows that Proposition~\ref{prop:standardDiagramsAreWeinstein} holds under the more general assumption that consecutive pairs of cut systems are gsWc, and the curve defining the Dehn twist for a pair of cut systems is dual to one of the curves from the first cut system.
\end{remark}

The previous construction suggests that multisection can also be encoded by a monodromy factorization, and here, we show that this is indeed the case. Through some care, one can determine a particular monodromy factorization from a multisection, however for the present paper it will be sufficient to have a family of possible factorizations.

Fixing a surface $\Sigma$, a system of cut curves defines a handlebody with boundary $\Sigma$. Given a handlebody, we can choose any system of cut curves which bound disks in the handlebody to encode it. If we specify a diffeomorphism $\Phi$ of $\Sigma$, we can apply that diffeomorphism to a cut curve system, to produce a new cut curve system. If the original cut curve system defines a handlebody $H$, we let $H_\Phi$ denote the handlebody defined by the image of a cut-system for $H$ under the map $\Phi$. Note this is independent of the choice of cut-system for $H$, because two different cut systems $\{\alpha_i\}$ and $\{\alpha_i'\}$ for $H$ will be related by some sequence of handle slides, and thus $\{\Phi(\alpha_i)\}$ will be related to $\{\Phi(\alpha_i')\}$ by a sequence of handle slides as well. We begin with a lemma which yields a sufficient condition for $H$ and $H_\Phi$ to form a Heegaard splitting of $\#_{g-1} S^1 \times S^2$ when $\Phi$ is a Dehn twist.

\begin{lemma}
\label{lem:twistingOnDualCurvesGivesMultisection}
    Let $\Sigma$ be a closed genus $g$ surface, $c$ be a simple closed curve on $\Sigma$, and $\tau_c$ be a (right or left handed) Dehn twist about $c$. Let $H$ be a handlebody with boundary $\Sigma$. Suppose there exists a properly embedded disk $D \subset H$ whose boundary, $\partial D = E_1$, is non-separating on $\Sigma$ such that $|E_1 \cap c| = 1$. Then $H \cup_\Sigma H_{\tau_c}$ is a Heegaard splitting of $\#_{g-1} S^1 \times S^2$.
\end{lemma}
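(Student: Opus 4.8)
The plan is to pick a cut system for $H$ adapted to $c$, write down the resulting Heegaard diagram explicitly, and recognize it as the standard genus-$g$ diagram for $\#_{g-1}S^1\times S^2$. The step I expect to be the main obstacle is the very first one: arranging a cut system $\{E_1,\alpha_2,\dots,\alpha_g\}$ for $H$ in which every curve other than $E_1$ is disjoint from $c$. Everything afterward is a routine identification of a standard diagram, but this reduction is what forces the Dehn twist to act in the simplest possible way. To produce such a cut system, let $N$ be a regular neighborhood of $E_1\cup c$ in $\Sigma$; since $|E_1\cap c|=1$, $N$ is a once-punctured torus with separating boundary $\delta$, so $\Sigma=N\cup_\delta\Sigma_0$ where $\Sigma_0$ has genus $g-1$ and one boundary component. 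Cutting $H$ along the disk $D$ yields a genus-$(g-1)$ handlebody $H'$, and a short Euler-characteristic computation (cutting the punctured torus $N$ along $E_1$ and capping the two new boundary circles produces a single disk) identifies $\partial H'$ with $\Sigma_0$ together with one capping disk. Any cut system for $H'$ can be isotoped off that disk and hence into $\Sigma_0\subset\Sigma$; the resulting curves $\alpha_2,\dots,\alpha_g$ are disjoint from $E_1$ and from $c$, which lies in the interior of $N$. Together with $E_1$ they form a cut system for $H$, since cutting $H$ along $D$ and then along the disks bounded by the $\alpha_i$ gives a ball.

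With this cut system in hand, the key point is that $\tau_c$ fixes each $\alpha_i$ up to isotopy for $i\ge 2$ (they are disjoint from $c$), so $H_{\tau_c}$ is represented by the cut system $\{\tau_c(E_1),\alpha_2,\dots,\alpha_g\}$, and the Heegaard diagram for $H\cup_\Sigma H_{\tau_c}$ is
\[(\Sigma,\ \{E_1,\alpha_2,\dots,\alpha_g\},\ \{\tau_c(E_1),\alpha_2,\dots,\alpha_g\}).\]
I would then record two intersection facts. First, $\tau_c(E_1)$ is disjoint from each $\alpha_i$ with $i\ge2$, because $\tau_c(\alpha_i)=\alpha_i$ and $E_1$ is disjoint from $\alpha_i$, so applying $\tau_c$ preserves disjointness. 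Second, $|E_1\cap\tau_c(E_1)|=1$: this follows from the standard Dehn-twist intersection estimate together with $|E_1\cap c|=1$ and $|E_1\cap E_1|=0$, or equivalently from the direct picture that $\tau_c(E_1)$ is $E_1$ band-summed with a parallel copy of $c$ and therefore meets $E_1$ exactly once.

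Finally I would recognize the diagram as a connected sum. Completing $\{E_1,\alpha_2,\dots,\alpha_g\}$ to a standard symplectic basis of $\Sigma$ with dual curves $c,d_2,\dots,d_g$ expresses $\Sigma$ as a connected sum of tori $T_1,\dots,T_g$, where $T_1$ carries $E_1$ and $\tau_c(E_1)$ and each $T_i$ ($i\ge2$) carries the parallel pair $\alpha_i,\alpha_i$. Since all of the curves are mutually disjoint and are distributed one pair per summand, the Heegaard diagram splits as the corresponding connected sum of genus-one diagrams: the pair on $T_1$ meets once and contributes $S^3$, while each parallel pair on $T_i$ contributes an $S^1\times S^2$ summand. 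Hence $H\cup_\Sigma H_{\tau_c}\cong S^3\#\big(\#_{g-1}S^1\times S^2\big)=\#_{g-1}S^1\times S^2$, as claimed. The handedness of $\tau_c$ never enters, since $|E_1\cap\tau_c^{\pm1}(E_1)|=1$ in either case.
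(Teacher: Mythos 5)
Your proof is correct, and it reaches the same target diagram as the paper --- $g-1$ parallel pairs of curves plus the dual pair $E_1,\tau_c(E_1)$ meeting once --- but by a genuinely different mechanism at the key step. The paper extends $E_1$ to an \emph{arbitrary} cut system $E$ for $H$ and then removes the points of $c\cap(E\setminus\{E_1\})$ one at a time by handle slides over $E_1$ along subarcs of $c$, using ``narrower and narrower'' sliding bands to ensure no new intersections are created; only then does it apply the twist. You instead build the adapted cut system directly: cutting $H$ along $D$ produces a genus-$(g-1)$ handlebody $H'$ whose boundary is $\Sigma_0$ capped off by a disk, and isotoping a cut system for $H'$ off that disk places $\alpha_2,\dots,\alpha_g$ in $\Sigma_0$, automatically disjoint from the once-punctured torus $N$ containing $E_1\cup c$. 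Your route buys a cleaner argument --- disjointness from $c$ is built in, so the slide bookkeeping (each slide kills one intersection without creating others) is avoided entirely; the paper's route buys an argument that operates on a given diagram purely by standard moves, in keeping with the algorithmic spirit of the rest of the paper. Two details in your write-up deserve a word but are easily supplied: when you isotope the cut system of $H'$ off the capping disk, the disks it bounds remain properly embedded with interiors disjoint from the reglued copy of $D$, so cutting $H$ along $D$ and then along these disks indeed yields a ball; and your final recognition step --- splitting the diagram as a connected sum of genus-one diagrams, with $S^3$ from the dual pair on $T_1$ and an $S^1\times S^2$ from each parallel pair --- is a correct expansion of what the paper leaves implicit when it asserts the diagram ``proves the lemma.'' Your closing observation that the handedness of $\tau_c$ never enters matches the paper's statement, since $|E_1\cap\tau_c^{\pm 1}(E_1)|=1$ in either case.
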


\begin{proof}
    We will produce a Heegaard diagram of $H \cup_\Sigma H_{\tau_c}$ which consists of $g-1$ parallel curves and two curves intersecting once, which proves the lemma. Extend $E_1$ to a cut system, $E$, for $H$. Now each point of $c \cap (E \backslash \{E_1\})$ can be eliminated by the following process: start at the point $c \cap E_1$ and follow $c$ until we meet the first point of $c \cap (E \backslash \{E_1\})$ call the resulting arc $a$ and the curve in $E$ at this intersection $E_i$. Sliding $E_i$ over $E_1$ along $a$ eliminates this intersection and does not introduce any new intersections between $E$ and $c$. Repeating this process, using narrower and narrower sliding bands, we may eliminate all intersections of $c$ and $E$ except for the point $c \cap E_1$.

    We may then obtain a Heegaard diagram for $H \cup_\Sigma H_{\tau_c}$ by taking the Dehn twist of the cut system resulting from these slides on $E$ about $c$. As $c$ is disjoint from all of the curves other than $E_1$ there are $g-1$ curves which are unchanged by the twisting and are therefore parallel. On the other hand $E_1$ and $ \tau_c(E_1)$ intersect once, providing the desired Heegaard diagram.
\end{proof}

Using the above lemma, we obtain a sufficient criteria for Dehn twists on cut systems to yield two sequential handlebodies in a multisection. Conversely, the following proposition shows that the sequential handlebodies in all multisections can be obtained in the fashion.

\begin{proposition}
    Let $\mathfrak{M} = X_1 \cup X_2 \cdots \cup X_n$ be a genus-$g$ multisection with multisection surface $\Sigma$, and with $X_i \cong \natural_{k_i} S^1 \times D^3$. Let $H^{1}, H^{2},  \dots , H^{n}, H^{n+1}$ be the 3-dimensional handlebodies lying at the boundaries of the $X_i$. Then there exist curves $c_1^1,c_2^1, \dots ,c_{g-k_1}^{1}, c_{1}^2,c_{2}^2, \dots ,c_{g-k_2}^{2}, \dots , c_{1}^{n}, c_{2}^{n}, \dots, c_{g-k_n}^{n}$ such that $H^{i}_{\tau(c_{1}^i)\circ \tau(c_{2}^i) \dots \cdot \circ \tau(c_{g-k_i}^{i})} = H^{i+1}$. 
\end{proposition}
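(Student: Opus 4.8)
The plan is to prove this as a converse to Lemma~\ref{lem:twistingOnDualCurvesGivesMultisection}, handling each consecutive pair $(H^i, H^{i+1})$ separately. The key observation is that $H^i \cup_\Sigma H^{i+1}$ is a genus-$g$ Heegaard splitting of $\partial X_i = \#_{k_i} S^1\times S^2$. First I would invoke the classification of Heegaard splittings of $\#_{k_i} S^1\times S^2$: every such splitting is standard, i.e.\ a stabilization of the minimal-genus one. Concretely, there is a homeomorphism $\psi_i$ carrying $(\Sigma, H^i, H^{i+1})$ to the standard model, in which $\Sigma$ is a connected sum of $g$ tori, $k_i$ of the tori carry a \emph{parallel} pair (a meridian in each cut system) and the remaining $g-k_i$ tori carry a \emph{dual} pair, a meridian $\alpha_j$ for $H^i$ and a longitude $\beta_j$ for $H^{i+1}$ with $|\alpha_j\cap\beta_j|=1$. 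This already accounts for the count: there are exactly $g-k_i$ dual handles, matching the number of curves $c^i_\bullet$ to be produced.

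Next I would analyze a single dual handle. On a torus with meridian $\alpha$ and longitude $\beta$, a direct computation with the Picard--Lefschetz formula $\tau_c(x)=x+\langle x,c\rangle c$ shows that the right-handed Dehn twist about the curve $c$ with $[c]=[\alpha]-[\beta]$ (of slope $-1$; the exact slope is a routine orientation-convention check) satisfies $\tau(c)(\alpha)=\beta$ up to isotopy. Since in standard form each dual handle is a genus-$1$ subsurface disjoint from all other handles, the curve $c$ is disjoint from every cut curve outside its own handle. In particular the $g-k_i$ twist curves obtained this way lie in pairwise disjoint subsurfaces, so the corresponding Dehn twists commute and each acts as the identity off its own handle.

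Finally I would assemble the composite. Let $c^i_1,\dots,c^i_{g-k_i}$ be the pullbacks under $\psi_i^{-1}$ of the slope $-1$ curves in the $g-k_i$ dual handles, and set $\Phi_i = \tau(c^i_1)\circ\cdots\circ\tau(c^i_{g-k_i})$. Applying $\Phi_i$ to the standardized cut system $\{\alpha_1,\dots,\alpha_g\}$ of $H^i$, each meridian $\alpha_j$ in a dual handle is sent to the corresponding longitude $\beta_j$, while every curve in a parallel handle is fixed, being disjoint from all the $c^i_j$; thus $\Phi_i(\{\alpha_j\})=\{\beta_j\}$ is precisely a cut system for $H^{i+1}$. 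Since $H^{i}_{\Phi_i}$ is independent of the chosen cut system for $H^i$ (as noted just before Lemma~\ref{lem:twistingOnDualCurvesGivesMultisection}), this yields $H^i_{\Phi_i}=H^{i+1}$, which is the desired relation. Running this argument for every $i$ produces all the curves simultaneously.

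The main obstacle is the first step: establishing that the diagram can be put in standard form with the dual pairs localized into disjoint genus-$1$ subsurfaces. This rests on the classification of Heegaard splittings of $\#_k S^1\times S^2$ (via Waldhausen's theorem together with the Reidemeister--Singer stabilization picture), which is exactly what reduces the problem to the elementary genus-$1$ twist computation. Once standard form is in hand, the local computation and the disjointness bookkeeping needed to see that $\Phi_i$ carries one cut system to the other are routine.
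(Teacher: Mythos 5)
Your proposal is correct and takes essentially the same route as the paper's proof: both invoke Waldhausen's theorem \cite{Wald68} to put the genus-$g$ splitting $H^i\cup_\Sigma H^{i+1}$ of $\#_{k_i}S^1\times S^2$ into standard form with $k_i$ parallel pairs and $g-k_i$ dual pairs, and then twist each dual meridian onto its longitude, with the twists supported away from all other cut curves. Your slope $-1$ curve with class $[\alpha]-[\beta]$ is, up to handedness and orientation conventions, exactly the paper's choice $c^i_j=\tau_{b_j}(a_j)$, so the two arguments coincide.
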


\begin{proof}
    For each handlebody, $H^i$, we will show how to produce the curves $c_i^1 ... c_i^{g-k_i}$.  Recall that the handlebodies $H_i$ and $H_{i+1}$ meet at the multisection surface $\Sigma$ to form a a Heegaard splitting of $\#^{k_i} S^1 \times S^2$. Consider a Heegaard diagram of $H_i \cup H_{i+1}$. By Waldhausen's theorem \cite{Wald68}, after a sequence of handle slides there is a cut system of curves $a_1,$ ... $,a_g$ for $H_i$ and $b_1 ... b_g$ for $H_{i+1}$ such that $a_i = b_i$ for  $0 \leq i \leq k_i$ and $|a_n \cap b_m| = \delta_{n,m}$ for  for $k_{i+1} \leq n,m \leq g$. 
    
    For $k_i \leq j \leq g$ we let $c_i^j = \tau_{b_j}(a_j)$. Then, $\tau_{c_i^j}(a_j) = b_j$. Moreover, since $c_j^i$ does not intersect any of the other $a_k$ for $k \neq j$, $\tau_{c_{j}^i}(a_k) = a_k$ for $k \neq j$. Then the product $\Pi_{j=1}^{k_i} \tau_{c_{j}^i}(a_k)$ takes a cut system for $H_i$ to a cut system for $H_{i+1}$. 
\end{proof}

We call the product $\Pi_{i=1}^n \Pi_{j=1}^{g-k_i} \tau(c_{j}^i)$ a \textbf{monodromy factorization} for $\mathfrak{M}$. By following through our construction in Theorem \ref{thm:palfToMultisection}, we can track the monodromy of a PALF onto the monodromy of a multisection which immediately yields the following.

\begin{corollary}
\label{cor:factorizationOfDouble}
    Suppose that $(X^4, \omega)$ is a Weinstein manifold which admits a PALF with fiber surface $F$ and monodromy factorization $P = \Pi_{i=1}^n \tau (c_i)$. Then $(X^4, \omega)$ admits an n-section with divides with multisection surface $\Sigma = F \cup -F$, dividing set $\partial F$, and monodromy factorization $P'$ obtained by applying the Dehn twists of $P$ to $F \subset \Sigma$.
\end{corollary}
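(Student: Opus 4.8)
The plan is to assemble the corollary directly from the explicit combinatorial description of the PALF multisection together with the notion of monodromy factorization developed in the preceding proposition; no genuinely new geometric input is required. First I would invoke Theorem~\ref{thm:palfToMultisection} to guarantee that the PALF decomposition $(X_1,\dots,X_n)$ is a multisection with divides. This already supplies the sectors, the $3$-dimensional handlebodies $H_1,\dots,H_{n+1}$, and the contact Heegaard splittings on the boundary of each $X_i$, so there is nothing left to verify about the existence of the decomposition itself.

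The next step is to read off the diagrammatic data from the explicit construction preceding the corollary. The core surface is $\Sigma = F_0\cup_\partial F_1$ with $F_0$ oppositely oriented, which we identify with $F\cup -F$, and the dividing set is $d=\partial F = F_0\cap F_1$. Fixing the identification $\Sigma = F_0\cup F_1$ via $\Psi_1$, the cut system for $H_1$ is the doubled complete arc system $\{a_j\cup a_j\}$, and for $i>1$ the cut system for $H_i$ is obtained from that of $H_{i-1}$ by the right-handed Dehn twist $\tau(c_{i-1})$, where $c_{i-1}$ is the $(i-1)$-st vanishing cycle pushed into $F_1\cong F$. Hence the transition from $H_i$ to $H_{i+1}$ is realized by a single Dehn twist about a vanishing cycle sitting in the copy $F\subset\Sigma$.

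The one point demanding care is to confirm that this sequence of single Dehn twists is genuinely a \emph{monodromy factorization} in the sense of the preceding proposition, rather than merely a list of handlebody modifications. Here I would invoke the allowability hypothesis of the PALF: each vanishing cycle $c_i$ is homologically essential, so by Poincare-Lefschetz duality there is an arc $a_i\subset F$ with $|a_i\cap c_i|=1$, which yields a compressing disk of $H_i$ whose boundary meets $c_i$ exactly once. This is precisely the dual-intersection hypothesis of Lemma~\ref{lem:twistingOnDualCurvesGivesMultisection}, so that lemma certifies that $H_i$ and its twist form a Heegaard splitting of $\#_{g-1}S^1\times S^2$, giving $g-k_i=1$, and the ordered collection $\tau(c_1),\dots,\tau(c_n)$ is a legitimate monodromy factorization for the multisection.

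To finish, I would match notation: the monodromy factorization $P'$ of the multisection is exactly the ordered product $\tau(c_1)\cdots\tau(c_n)$ of the Dehn twists of the PALF factorization $P=\prod_{i=1}^{n}\tau(c_i)$, reinterpreted as acting on the copy $F\subset\Sigma = F\cup -F$, which is the statement of the corollary. The main (and only mild) obstacle is the bookkeeping of orientation conventions in identifying $F_0,F_1$ with $-F,F$ and checking that allowability delivers exactly the dual curve required by Lemma~\ref{lem:twistingOnDualCurvesGivesMultisection}; beyond this the result is an immediate consequence of the earlier theorem and proposition.
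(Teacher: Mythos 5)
Your proposal is correct and takes essentially the same route as the paper, which deduces the corollary by tracking the construction of Theorem~\ref{thm:palfToMultisection} together with the explicit diagrammatic description preceding the corollary ($\Sigma=F_0\cup_\partial F_1$, dividing set $\partial F$, consecutive cut systems related by a single right-handed Dehn twist about $c_{i-1}\subset F_1$). Your additional step invoking allowability and Lemma~\ref{lem:twistingOnDualCurvesGivesMultisection} simply makes explicit the dual-arc verification that already appears inside the proof of Theorem~\ref{thm:palfToMultisection}, so it is sound bookkeeping rather than a different argument.
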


\subsection{Monodromy Substitution}
\label{subsec:MonodromySubstitution}
In this section we will demonstrate how a monodromy substitution affects a multisection with divides. We begin with the analogous construction for PALFs.

\begin{definition}
Let $f:M^4 \to D^2$ be a PALF with fiber surface $\Sigma$ and monodromy factorization $\Pi_{i=1}^n \tau(c_i)$. Suppose that for some $k,l,m,n$ and curves $c_m'...c_n'$ we have that, as mapping classes, $\Pi_{i=k}^l \tau(c_k) = \Pi_{j=m}^n \tau(c_j')$. Then we may obtain a new Lefschetz fibration with monodromy factorization given by 
\[\Pi_{i=1}^k \tau(c_i)\Pi_{j=m}^n \tau(c_j') \Pi_{i=l+1}^n \tau(c_i).\] We say that the new Lefschetz fibration is obtained by a \textbf{monodromy substitution} on $f$.
\end{definition}

Monodromy substitution has been used extensively to produce new symplectic manifolds from existing ones. In particular, in \cite{EndGur10}, the authors show that the lantern relation, pictured in Figure \ref{fig:lanternRelation}, can be used to perform a rational blowdown on the configuration $C_2$ (see \cite{GomSti99} Section 8.5 for an exposition on these operations). This was later generalized in \cite{EMV11} to realize an infinite family of rational blowdowns as monodromy substitutions using daisy relations. In general, any monodromy substitution can be thought of as some symplectic cut-and-paste operation.

\begin{figure}
    \centering
    \includegraphics[scale=.3]{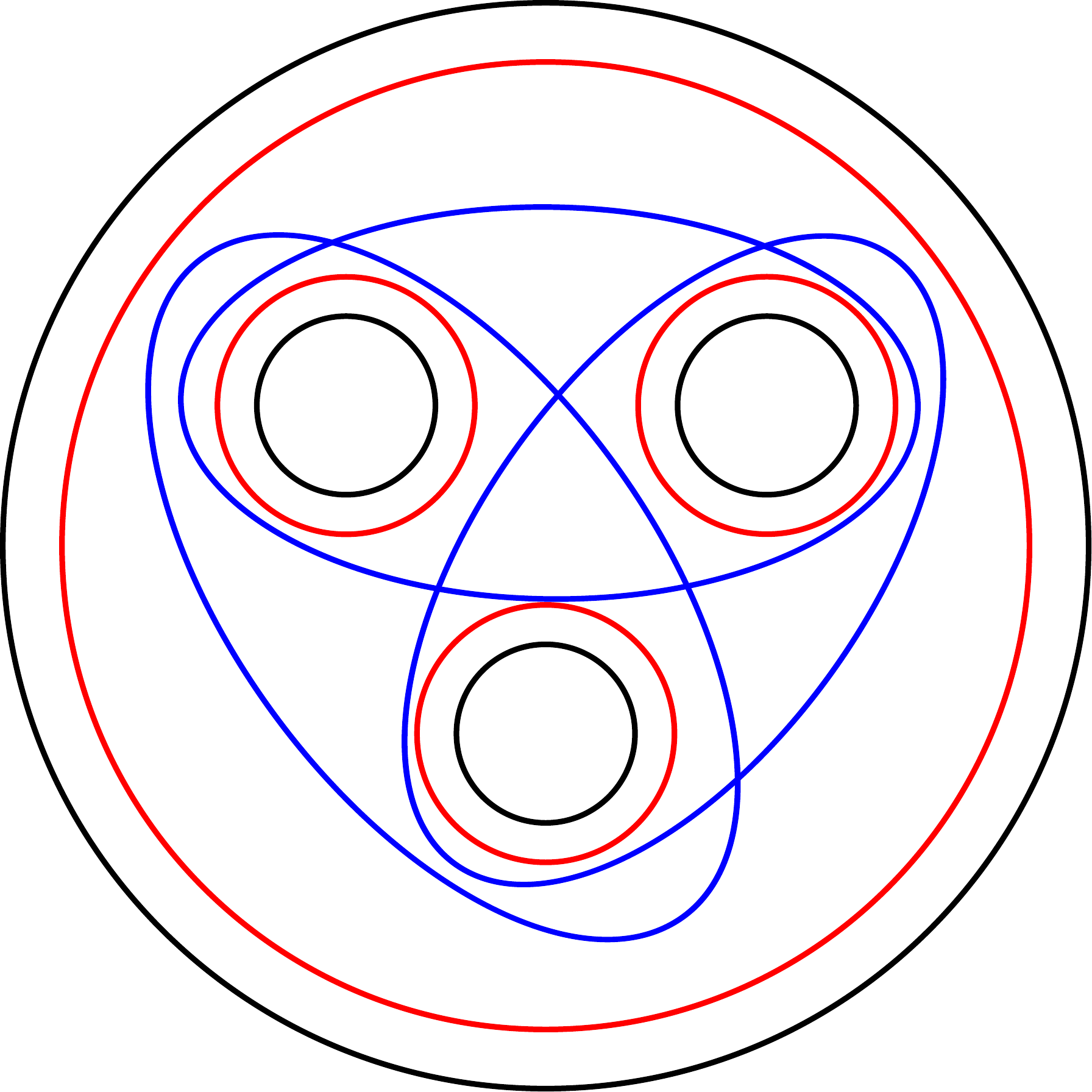}
    \caption{The lantern relation in the mapping class group in a 4-holed sphere states that right handed Dehn twists about the red curves gives the same mapping class as the right handed Dehn twists about the blue curves.}
    \label{fig:lanternRelation}
\end{figure}

There is an analogous process of monodromy substitution on a multisection.

\begin{definition}
    Let $\mathfrak{M}$ be a multisection (not necessarily with boundary) starting at the handlebody $H$ with monodromy factorization given by $\Pi_{i=1}^n \tau(c_i)$. Let $H^k$ be the handlebody $H_{\tau(c_1)\cdot\tau(c_2)\cdot \dots \cdot \tau(c_k)}$ and suppose that $c_{k+1}', c_{k+2}',\dots, c_{j'}'$ is a sequence of curves such that for all $l \in \{k+1, k+2,\dots, j'\}$ we have that $c_l'$ is dual to some disk in $H^k_{\tau(c_{k+1}')\cdot \dots \cdot \tau(c_{l-1}')}$ (this will guaranteed that the assumptions of Lemma \ref{lem:twistingOnDualCurvesGivesMultisection} hold).  Suppose further that $H^k_{\tau(c_{k+1})\cdot \tau(c_{k+2})\cdot \dots \cdot \tau(c_{j})} = H^k_{\tau(c_{k+1}')\cdot\tau(c_{k+2}')\cdot \dots \tau(c_{j'}')}$. Then we may obtain a new multisection $\mathfrak{M}_S$ starting at the handlebody $H$ and specified by the monodromy factorization $\Pi_{i=1}^k \tau(c_i) \Pi_{i=k+1}^{j'} \tau(c_i') \Pi_{i=j}^n \tau(c_i)$. We call $\mathfrak{M}_s$ a \textbf{monodromy substitution} of $\mathfrak{M}$.
\end{definition}

It follows immediately from Corollary \ref{cor:factorizationOfDouble} that we can find monodromy substitutions by doubling a PALF and a monodromy substitution of that PALF. Carrying this out for the lantern relation gives us a monodromy substitution on a multisection with divides yielding the operation outlined in Figure \ref{fig:lanternRelationOnBisections}.

\begin{figure}
    \centering
    \includegraphics[scale=.25]{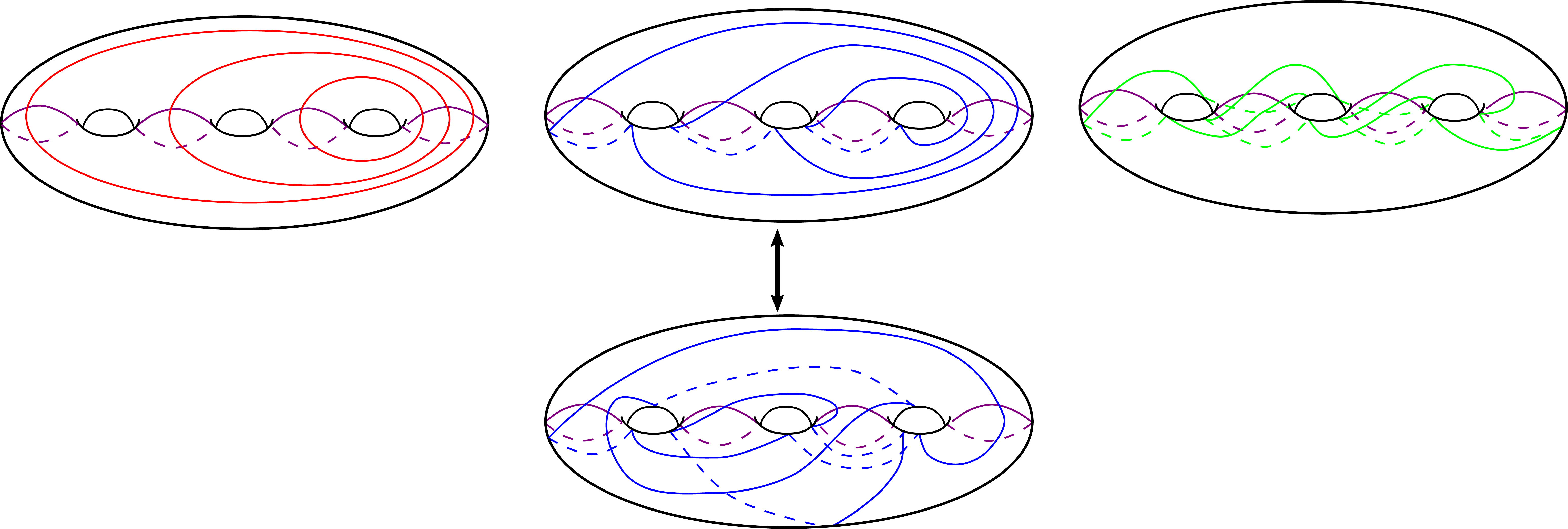}
    \caption{The three handlebodies on the top row yield a bisection whose monodromy is the double of the right handed Dehn twists about the red curves in Figure \ref{fig:lanternRelation}. Replacing the middle handlebody by the one below it yields a multisection whose monodromy is the double of the right handed Dehn twists about the blue curves in Figure \ref{fig:lanternRelation}. The overall change in a bisection containing these handlebodies is a $C_2$  rational blowdown. }
    \label{fig:lanternRelationOnBisections}
\end{figure}

\section{Genus-1 multisections and stabilizations} \label{s:g1stab}

\subsection{Classification of genus 1 multisections with divides}
\label{subsec:genus1Classification}

In this subsection we will provide a characterization of genus-1 multisections with divides. For examples of the diagrams for the unique 2- and 3-sections with divides, see Figure~\ref{fig:genusOne}. Smooth genus-1 multisections are well characterized by their diagrams, which consist of sequences $(\alpha_1, ... \alpha_n)$ with $|\alpha_i \cap \alpha_{i+1}| = 1.$ In \cite{IKLM}, the authors show that smooth genus-1 $n$-sections with boundary correspond to linear plumbings of $(n-1)$ disk bundles over the sphere. Moreover, given the oriented sequence of cut curves $(\alpha_1, \alpha_2, ... \alpha_n)$ defining a genus-1 multisection diagram, the Euler number of the $i^{th}$ disk bundle is given by the algebraic intersection $\langle \alpha_{i-2}, \alpha_i \rangle$.

\begin{prop}
\label{prop:Genus1Classification}
There is a unique genus-1 n-section with divides for each $n \geq 2$. These correspond to the linear plumbing of $(n-1)$ disk bundles of Euler number $-2$ over the sphere ($T^*S^2$'s).
\end{prop}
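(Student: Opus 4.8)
The plan is to first use the divides data to force the combinatorial shape of a genus-$1$ multisection with divides, then to observe that tightness pins down the one remaining sign at each step, and finally to identify the resulting smooth manifold via \cite{IKLM}. First I would determine the dividing set. On the torus $\Sigma$, the multicurve $d$ must separate $\Sigma$ into two homeomorphic halves, which by Lemma~\ref{l:contHeeg} are the two halves of a supporting open book whose page $F$ doubles to $\Sigma$. Since the double of $F$ has genus $1$ and $F$ has boundary, $\chi(F)=0$, so $F$ is an annulus and $d=\partial F$ consists of two parallel essential simple closed curves. Using the mapping class group of the pair $(\Sigma,d)$ I normalize $d$ to be two copies of the $(1,0)$ curve, so the annular page $F$ has core $c$ of class $(1,0)$.

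Next I would analyze the cut systems. In genus $1$ each cut system $C_i$ is a single curve $\alpha_i$, and the requirement that $\alpha_i$ meet $d$ in exactly two points says $\alpha_i$ crosses the dividing slope once, forcing $\alpha_i$ to have homology class $(p_i,1)$ for some integer $p_i$. The smooth genus-$1$ multisection condition $|\alpha_i\cap\alpha_{i+1}|=1$ then gives $|p_i-p_{i+1}|=1$, so consecutive slopes change by $\pm1$.

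The crux is tightness, which I expect to be the main obstacle. Each consecutive pair presents a genus-$1$ contact Heegaard splitting $(\Sigma,d,\alpha_i,\alpha_{i+1})$ of $S^3$, and by the annular-page classification recorded in Figure~\ref{fig:genus1S3s} (the monodromy is a single Dehn twist about $c$, right-handed giving the tight structure and left-handed the overtwisted one), the requirement that this structure be tight selects the right-handed twist at every step. Thus $\alpha_{i+1}=\tau_c(\alpha_i)$ for a consistent orientation, i.e. $p_{i+1}=p_i+1$ for all $i$, so after the normalization above $\alpha_j=(j-1,1)$ and the entire sequence is determined. This shows there is a unique genus-$1$ $n$-section with divides, realized for instance by the annular-page PALF with monodromy $\tau_c^{\,n}$ through Theorem~\ref{thm:palfToMultisection} and Corollary~\ref{cor:factorizationOfDouble}. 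The delicate point is to match the handedness of the core Dehn twist to the sign of the contact structure relative to the chosen orientations, and thereby to exclude the overtwisted (left-handed) option uniformly across all steps.

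Finally I would identify the manifold. By \cite{IKLM} the underlying smooth genus-$1$ $n$-section is the linear plumbing of $(n-1)$ disk bundles over $S^2$ whose Euler numbers are the algebraic intersections $\langle\alpha_{i-2},\alpha_i\rangle$. Substituting $\alpha_j=(j-1,1)$ gives
\[\langle\alpha_{i-2},\alpha_i\rangle=(i-3)\cdot 1-1\cdot(i-1)=-2\]
for every $i$, so each disk bundle has Euler number $-2$ and is therefore a copy of $T^{*}S^2$. Equivalently, the monodromy factorization $\tau_c^{\,n}$ on the annulus is the standard PALF of the $A_{n-1}$ Milnor fiber, the linear plumbing of $(n-1)$ copies of $T^{*}S^2$, which completes the identification.
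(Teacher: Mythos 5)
Your proof is correct and takes essentially the same route as the paper's: both arguments pin down the dividing set as two parallel essential curves (annular page by Euler characteristic), use tightness of each consecutive genus-1 contact Heegaard splitting of $S^3$ --- via the annular open book classification recorded in Figure~\ref{fig:genus1S3s} --- to force a right-handed twist of the cut curve about the dividing slope at every step, and then identify the resulting smooth manifold as the linear $(-2)$-plumbing using the intersection-number formula of \cite{IKLM}, with existence supplied by the annular PALF through Theorem~\ref{thm:palfToMultisection}. The only difference is cosmetic: you normalize the dividing slope first and write $\alpha_j=(j-1,1)$, whereas the paper normalizes $\alpha_1,\alpha_2$ and solves for the dividing slope.
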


\begin{proof}

First we observe that the linear plumbing of $(n-1)$ disk bundles of Euler number $-2$ supports a PALF structure whose fiber is an annulus with $n$ vanishing cycles all parallel to the core circle of the annulus. By the algorithm in Theorem~\ref{thm:palfToMultisection}, these Weinstein domains have genus $1$ multisections.

Now we show that these are the only genus $1$ multisections with divides.

For a contact Heegaard splitting of $S^3$, the dividing set consists of two parallel curves. Fixing coordinates $(a,b)$ for $H_1(T^2)$, we may assume, after an orientation preserving homeomorphism that $\alpha_1 = (0,1)$ and $\alpha_2 = (1,0)$. The dividing set will be two parallel curves of slope $d$. Note that $\alpha_i$ intersects the dividing set twice if and only if $|\alpha_i \cap d| = 1$. Therefore $d=(1,\pm 1)$. Since $d=(1,1)$ corresponds to a contact Heegaard splitting for an overtwisted contact structure on $S^3$, we must have $d=(1,-1)$.

We first treat the case $n=2$, and then proceed inductively. In this case, we seek to find the possible slopes for $\alpha_3$. As $|\alpha_2 \cap d| = 1$, all curves which intersect $\alpha_2$ once are given by Dehn twists of $\alpha_2$ about $d$. In addition the requirement that $|\alpha_3 \cap d| = 1$ means that $\alpha_3$ is a single Dehn twist of $\alpha_2$ about $d$. If this Dehn twist is left handed, then the quadruple $(\Sigma, d, \alpha_2, \alpha_3)$ gives a diagram for the overtwisted $S^3$, so the Dehn twist must be right handed. Therefore $\alpha_3 = (1,-2)$ and by the classification of smooth genus 1 multisections $(\Sigma,\alpha_1, \alpha_2,\alpha_3, d)$ gives a bisection with divides of the disk bundle of Euler number $-2$ over the sphere.

In general suppose that $(\alpha_1 ... \alpha_{n-1})$ is a sequence of curves defining a $(n-1)$-section with divides. Then, as in the base case, $\alpha_{n-1}$ is a right handed Dehn twist of $\alpha_{n-2}$ about $d$ and $\alpha_n$ is a right handed Dehn twist of $\alpha_{n-1}$ about $d$. Therefore, $\langle \alpha_{n-2}, \alpha_n \rangle = -2$ so that we have indeed plumbed an additional $-2$-sphere.
\end{proof}

\begin{figure}
    \centering
    \includegraphics[scale=.3]{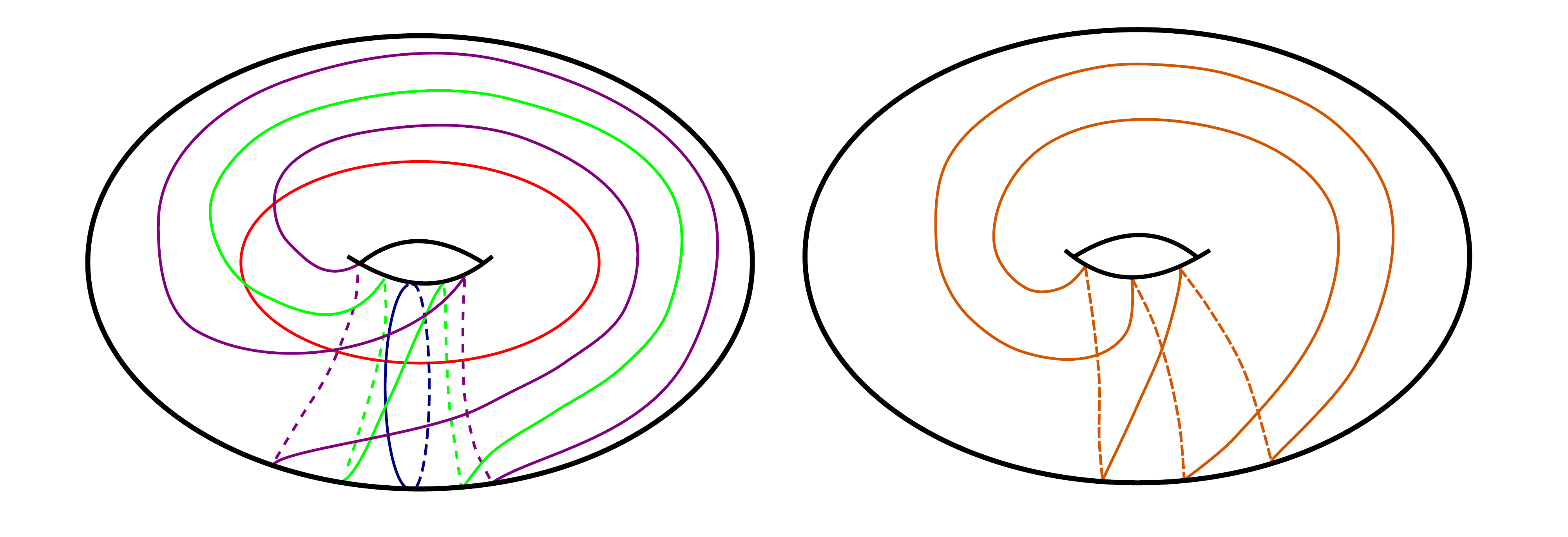}
    \caption{Left: The unique genus-1 bisection with divides corresponding to the disk bundle over the sphere of Euler number $-2$. Right: Adding this curve to the decomposition yields the unique genus-1 3-section with divides.}
    \label{fig:genusOne}
\end{figure}

\subsection{Stabilization}
\label{subsec:Stabilization}

Here, we will introduce an operation on multisections with divides which takes a genus-$g$ $n$-section and produces a genus-$(g+1)$ $(n+1)$-section. An explicit example of this process applied to the genus-1 bisection of $T^*S^2$ can be seen in Figure \ref{fig:stabilizingTStarS2}. 

This stabilization operation can be seen from both the perspective of a handle decomposition, as in Section \ref{ss:existenceKirby} or from the perspective of a PALF. We will focus on the second perspective, and we recall the definition of a stabilization of a PALF. 

\begin{definition}
    Let $f: M^4 \to D^2$ be a PALF with fiber surface $\Sigma$ 
    monodromy factorization $\Pi_{i=1}^n \tau(c_i).$ Then a \textbf{stabilization} of $f$ is a PALF with fiber surface $\Sigma'$ and monodromy factorization $\Pi_{i=1}^{n+1} \tau(c_i)$ where $\Sigma'$ is obtained by attaching a 2-dimensional 1-handle to $\Sigma$ and $c_{n+1}$ is a curve on $\Sigma'$ intersecting the belt sphere of the attached 1-handle geometrically once.
\end{definition}

\begin{figure}
    \centering
    \includegraphics[scale=.4]{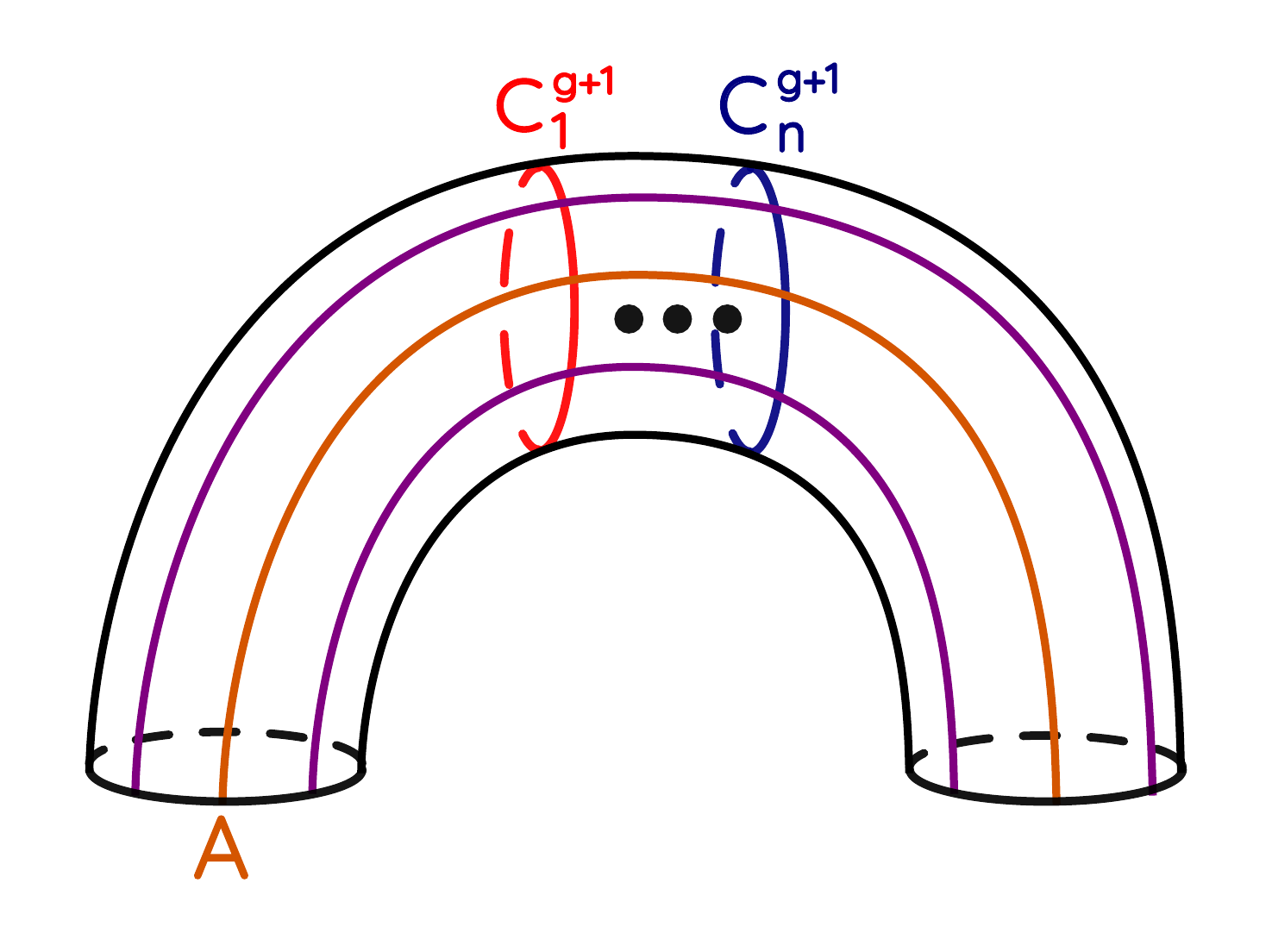}
    \caption{The annulus used to perform a stabilization of a multisection with divides. The existing cut systems $C_1...C_n$ each receive a new curve $c_i^{g+1}$ to form cut systems $C_i'$ for $i \in \{1,...,n \}$. The arc $A$ glues with an arc in the existing multisection which is disjoint from the dividing set to yield a curve $C$. Performing a Dehn twist about $C$ to each curve in the cut system $C_n'$ yields a new handlebody $C_{n+1}'$ so that the sequence of cut systems $(C_1', ..., C_{n+1}')$ is a new multisection diagram.}
    \label{fig:stabilizationDiagram}
\end{figure}

By doubling the 1-handle used in a stabilization of a PALF, we obtain the diagram in Figure \ref{fig:stabilizationDiagram}. When glued to a multisection with divides appropriately we obtain a new multisection with divides, whose construction is outlined in the following definition.

\begin{definition}
    Let $\mathfrak{M}$ be a multisection with divides with a diagram given by $(\Sigma, d, C_1, ..., C_n)$. Suppose $C_i$ is made up of the curves $(c_i^1, ..., c_i^g)$. Let $p_1$ and $p_2$ be two points on $d$ and $A'$ be an arc between $p_1$ and $p_2$ which is contained in $\Sigma^+$. Let $\Sigma'$ be the surface obtained by removing neighbourhoods of $p_1$ and $p_2$ and gluing in the surface shown in Figure \ref{fig:stabilizationDiagram} to the resulting boundary where the gluing is performed so that the arc $A$ meets the arc $A'$ to form a curve $c$. By slight abuse of notation, we will keep the notation $c_i^k$ for the simple closed curve on $\Sigma'$ after performing surgery. We then obtain a new multisection $\mathfrak{M}_S$ with diagram given by $(\Sigma', d, C_1', ..., C_n', C_{n+1}')$ where, for i $\in \{1, ... , n \}$, $C_i' = (c_i^1, ..., c_i^g , c_i^{g+1})$ and $C_{n+1}' = (\tau_c( c_n^1 ), ..., \tau_c( c_n^g ) , \tau_c( c_n^{g+1} ))$.
\end{definition}

We can easily see that $\mathfrak{M}_S$ is still a multisection diagram with divides. The condition that each curve in each cut system intersects the dividing set in two points is ensured by looking at the model in Figure~\ref{fig:stabilizationDiagram} and the fact that the curve we Dehn twist about is disjoint from the dividing set. That $\mathfrak{M}_S$ still represents a multisection smoothly follows from Lemma~\ref{lem:twistingOnDualCurvesGivesMultisection}. That $(C_n',C_{n+1}',d)$ represents a contact Heegaard splitting of the \emph{tight} contact structure on $\#_{g-1} S^1\times S^2$ follows from the fact that we can obtain this contact Heegaard splitting as the boundary of a PALF sector as in the proof of Theorem~\ref{thm:palfToMultisection}.

Observe that the manifold represented by $(\Sigma',d,C_1',\dots, C_n')$ is related to the manifold represented by $(\Sigma,d,C_1,\dots, C_n)$ by attaching a single $1$-handle. Thus if $(\Sigma,d,C_1,\dots, C_n)$ represents a Weinstein domain, then $(\Sigma',d,C_1',\dots, C_n')$ also represents a Weinstein domain obtained by attaching a single Weinstein $1$-handle. (Note that there is no constraint on the attaching data for a $1$-handle attachment to be Weinstein.) By Remark~\ref{rem:PALFstd}, the new sector $W_n$ amounts to attaching a Weinstein cobordism to $W''=W_1'\cup\dots \cup W_{n-1}'$. Therefore, the stabilized diagram $(\Sigma',d,C_1',\dots, C_n',C_{n+1}')$ also represents a Weinstein domain. Furthermore, the Weinstein cobordism from the sector $W_n$ attaches a Weinstein $2$-handle which cancels with the added $1$-handle (the attaching sphere of the $2$-handle intersects the belt sphere of the $1$-handle in one point). Therefore, in total, we have added a trivial Weinstein cobordism (one which is Weinstein homotopic to a trivial cobordism). Namely, if before stabilization, the multisection diagram with divides represented a Weinstein domain, then after stabilization, the multisection diagram with divides represents the same Weinstein domain up to Weinstein homotopy. We summarize in the following proposition.

\begin{proposition}
    Let $\mathfrak{M}$ be a multisection with divides which admits a multisection diagram with divides given by $(\Sigma, d, C_1, ..., C_n)$. Let $\mathfrak{M}'$ be a stabilization of $\mathfrak{M}$ with a multisection diagram for $\mathfrak{M}'$ given by $(\Sigma', d,  C_1', ..., C_{n+1}')$. Then, $\mathfrak{M}'$ is a multisection diagram with divides, and the Weinstein manifolds encoded by $\mathfrak{M}$ and $\mathfrak{M}'$ are Weinstein homotopic.
\end{proposition}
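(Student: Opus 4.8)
The plan is to establish the two assertions in turn: first that $\mathfrak{M}'$ satisfies the definition of a multisection diagram with divides, and then that stabilization alters the encoded Weinstein domain only by a Weinstein homotopy. Both reduce to reading off the local model of Figure~\ref{fig:stabilizationDiagram} in light of the PALF machinery already developed, so the proof is mostly a matter of checking the defining conditions and then interpreting the operation handle by handle.

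For the first assertion I would verify the defining conditions directly. Each new curve $c_i^{g+1}$ crosses $d$ in exactly two points by inspection of the stabilizing annulus, and since $C_{n+1}'$ is obtained from $C_n'$ by a Dehn twist about the curve $c$, which is disjoint from $d$, every curve of every cut system still meets $d$ twice. For $i<n$ the pair $(\Sigma',d,C_i',C_{i+1}')$ differs from $(\Sigma,d,C_i,C_{i+1})$ only by the addition of a common parallel curve, so it remains a contact Heegaard diagram of a tight $\#_{k_i+1}S^1\times S^2$. For the new pair $(C_n',C_{n+1}')$ the twisting curve $c$ is dual to $c_n^{g+1}$, so Lemma~\ref{lem:twistingOnDualCurvesGivesMultisection} shows $(\Sigma',C_n',C_{n+1}')$ is a Heegaard splitting of $\#_{g-1}S^1\times S^2$; that it carries the \emph{tight} structure follows by realizing $(\Sigma',d,C_n',C_{n+1}')$ as the boundary contact Heegaard splitting of a single-critical-point PALF sector, exactly as in the proof of Theorem~\ref{thm:palfToMultisection}. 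Hence $\mathfrak{M}'$ is a multisection diagram with divides.

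For the Weinstein claim I would track the operation one handle at a time. Passing from $(\Sigma,d,C_1,\dots,C_n)$ to $(\Sigma',d,C_1',\dots,C_n')$ adds a single parallel cut curve to every handlebody, which corresponds smoothly to attaching one $4$-dimensional $1$-handle; as any $1$-handle attachment can be taken Weinstein, the truncated diagram $(\Sigma',d,C_1',\dots,C_n')$ still encodes a Weinstein domain whenever the original did. By Remark~\ref{rem:PALFstd}, the appended sector $W_n$ --- in which consecutive cut systems differ by the right-handed twist $\tau_c$ about the curve $c$, which is disjoint from $d$ and lies on the positive side and is dual to $c_n^{g+1}$ --- is generalized standard Weinstein cobordant, so it is realized by attaching a single Weinstein $2$-handle along the Legendrian lift of $c$ into the core of $H_n'$. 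Thus $\mathfrak{M}'$ encodes a Weinstein domain.

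Finally I would argue that the composite cobordism is trivial. The attaching sphere of the new $2$-handle is $c$, which by the definition of a PALF stabilization meets the belt sphere of the new $1$-handle transversally in a single point; hence the $1$-handle and $2$-handle form a canceling pair, and by Weinstein handle cancellation the combined cobordism is Weinstein homotopic to a trivial one, so $\mathfrak{M}$ and $\mathfrak{M}'$ encode Weinstein-homotopic domains. I expect the crux to be exactly this last step: upgrading the evident \emph{smooth} cancellation of the $1$/$2$-handle pair to a cancellation in the Weinstein category up to homotopy. This is where the PALF-sector interpretation of Remark~\ref{rem:PALFstd} does the essential work, since it presents the new sector as an honest Weinstein $2$-handle attachment along a Legendrian meeting the co-core once, which is precisely the hypothesis of the standard Weinstein handle-cancellation result.
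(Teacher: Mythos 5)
Your proposal is correct and follows essentially the same route as the paper's proof: verifying the two-point intersections with $d$ from the local model of Figure~\ref{fig:stabilizationDiagram}, invoking Lemma~\ref{lem:twistingOnDualCurvesGivesMultisection} plus the PALF-sector argument of Theorem~\ref{thm:palfToMultisection} for tightness of the new pair, and then reading the stabilization as a Weinstein $1$-handle followed by a canceling Weinstein $2$-handle via Remark~\ref{rem:PALFstd}. The only deviation is cosmetic: your $\#_{g-1}S^1\times S^2$ for the new pair should be $\#_g S^1\times S^2$ since $\Sigma'$ has genus $g+1$, an indexing slip the paper's own exposition shares.
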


\begin{figure}
    \centering
    \includegraphics[scale=.32]{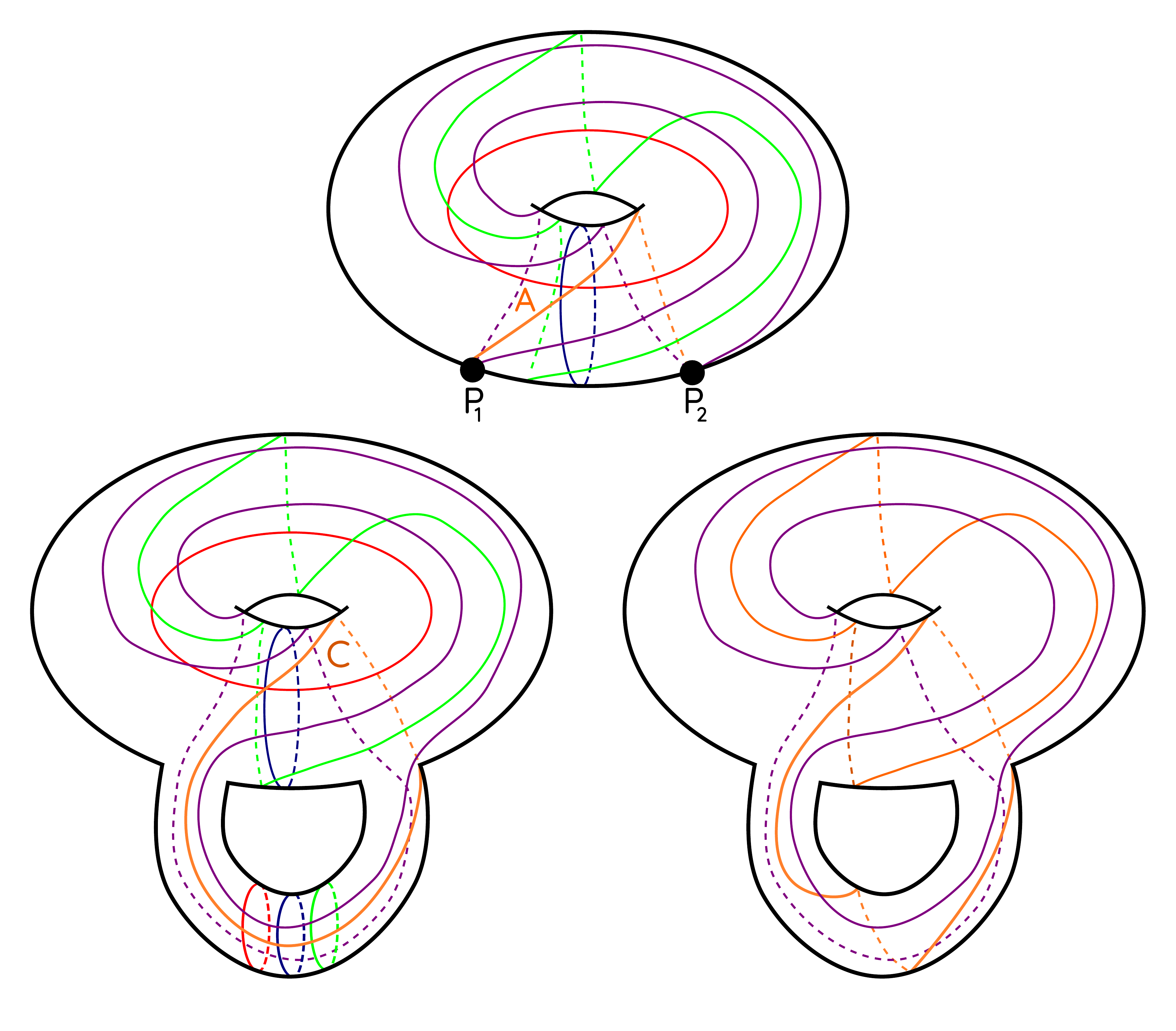}
    \caption{This figure gives the process for stabilizing the genus-1 multisection diagram  for $T^*S^2$. Top: The genus-1 multisection diagram for $T^*S^2$ together with two points on the dividing set and an arc $A$ whose interior is disjoint from the dividing set. Bottom left: Gluing the stabilizing annulus in Figure \ref{fig:stabilizationDiagram} yields the first three cut systems for the stabilization, together with a curve $C$. Bottom Right: Dehn twisting the green cut system about the curve $C$ yields the final cut system for the stabilized multisection.}
    \label{fig:stabilizingTStarS2}
\end{figure}

\section{Questions} \label{s:questions}

Donaldson \cite{Don96} and Giroux \cite{Gir02,Gir17} proved that every symplectic manifold admits a symplectic divisor such that the complement of a standard neighborhood of the divisor is a Weinstein domain. In search of a diagrammatic theory for closed symplectic manifolds, one strategy would be to find a suitable structure on the neighborhood of the divisor,and glue as in \cite{IslNay20} to a multisection with divides for the Weinstein complement. This leads us to the following questions.

\question{Can we construct analogous multisections with divides for symplectic $4$-manifolds with \emph{concave} boundary? In particular, concave neighborhoods of symplectic divisors. Do we need different diagrammatic information to encode concave boundary? How do we specify in a multisection diagram how to symplectically glue convex pieces to concave pieces?}

The results of Section \ref{s:PALF} primarily consisted of using PALFs to obtain information about multisections with divides, but in favorable conditions (see Remark \ref{rem:PALFstd}), this construction can be reversed to obtain a PALF from a multisection with divides. It is an open question as to whether two PALFs corresponding to the same Weinstein 4-manifold are related by stabilization, Hurwitz equivalence, and an overall conjugation. We have translated the stabilization move in Section \ref{subsec:Stabilization}, and using a similar approach, the other two moves can readily be translated into moves on multisections with divides. Here, techniques used in the stable equivalence of trisections in \cite{GayKir16} could prove fruitful in addressing the following question.

\question{By passing to the related multisection with divides, can we show that any two PALFS corresponding to the same Weinstein 4-manifold are related by stabilization, Hurwitz equivalence, and an overall conjugation?}

A preliminary result one would need in order to answer the previous question would be a uniqueness result for multisections with divides. 

\question{Let $W_1$ and $W_2$ be multisections with divides for the same underlying Weinstein manifold $W$. What is a sufficient set of moves relating $W_1$ and $W_2$.}

In \cite{Gay19} and \cite{Isl21} the authors show that every multisection with multisection surface $\Sigma_g$ can be realized as a generic path of smooth real-valued functions on $\Sigma_g$. Much of the data of these functions is discarded in this process, so that, in the end, the smooth topology is determined by the level sets of the regular times. By keeping track of more information, it is likely that one provide an answer to the following question which would link symplectic topology with the theory of smooth functions on surfaces.

\question{Which generic paths of smooth functions $f: \Sigma \times [0,1] \to \mathbb{R}$ yield multisections with divides?}

In Proposition \ref{prop:standardDiagramsAreWeinstein} and Remark~\ref{rem:PALFstd}, we gave a sufficient condition for a multisection diagram with divides to correspond to a Weinstein manifold. This condition is likely not necessary and is, in practice, difficult to check. We therefore pose the following question.

\question{Which multisection diagrams with divides correspond to Weinstein manifolds?}

One of the conditions we require for a multisection with divides is that the contact structure on the boundary of each sector is tight. While checking tightness is generally quite difficult, we are dealing with the restricted class of connected sums of $S^1 \times S^2$ where the problem is likely more tractable. In particular, in this situation, the non-vanishing of contact invariant in Heegaard-Floer homology is equivalent to tightness \cite{OzsSza05}. Moreover the Heegaard-Floer homology of this manifold is particularly simple. Since each sector in a multisection has a contact Heegaard splitting  on its boundary, we posit the following question. 

\question{Is there a simple algorithm for computing the contact invariant of a contact structure on $\#_k S^1 \times S^2$ from a contact Heegaard diagram?}

By analyzing a PALF filling of an open book supporting a contact structure Oszv\'ath and Szab\'o show that the contact invariant invariant is non-vanishing for Weinstein fillable manifolds~\cite{OzsSza05}. A multisection with divides seems to encode similar information to a PALF, but with Heegaard splittings playing the role of open book decompositions. In light of this, it is possible that one can give a more direct proof of this non-vanishing result. 

\question{Given a multisection with divides, locate the contact element of the three manifold given by the contact Heegaard splitting of the boundary. Show that if the multisection consists of standard Weinstein cobordisms (as in Proposition \ref{prop:standardDiagramsAreWeinstein} or the generalization in Remark~\ref{rem:PALFstd}) this element is non-vanishing.}

\bibliographystyle{amsalpha}
\bibliography{thebib.bib}
\end{document}